\documentclass{amsart}

\usepackage{amssymb}
\usepackage{amsmath}
\usepackage{amsthm}
\usepackage{amsbsy}
\usepackage{bm}
\usepackage{hyperref}
\date{\today}
\usepackage{cite}
\usepackage{array}
\date{\today}
\newcommand{\comment}[1]{}

\theoremstyle{theorem}
    \newtheorem{theorem}{Theorem}
    \newtheorem{lemma}[theorem]{Lemma}

\theoremstyle{definition} 
    
    \newtheorem{fact}[theorem]{Fact}
    \newtheorem{result}[theorem]{Result}
    \newtheorem{remark}[theorem]{Remark}
    \newtheorem{example}[theorem]{Example}
    \newtheorem{exercise}[theorem]{Exercise}






\def\C{\mathbb{C}}
\def\D{\mathbb{D}}

\def\X{{\mathcal X}}



\def\<{\langle}
\def\>{\rangle}

\newcommand{\E}{\mbox{\bf E}}

\def\bar{\overline}
\def\P{{\bf P}}

\def\d{\partial}



\newcommand\mnote[1]{} 
\newcommand\be{\begin{equation*}}

\newcommand\ee{\end{equation*}}

\newcommand\ben{\begin{equation}}
\newcommand\een{\end{equation}}
\newcommand\bes{\begin{eqnarray*}}
\newcommand\ees{\end{eqnarray*}}

\newcommand\bex{\begin{exercise}}
\newcommand\eex{\end{exercise}}
\newcommand\beg{\begin{example}}
\newcommand\eeg{\end{example}}
\newcommand\benu{\begin{enumerate}}
\newcommand\eenu{\end{enumerate}}
\newcommand\beit{\begin{itemize}}
\newcommand\eeit{\end{itemize}}
\newcommand\berk{\begin{remark}}
\newcommand\eerk{\end{remark}}
\newcommand\bdefn{\begin{defintion}}
\newcommand\edefn{\end{definition}}
\newcommand\bthm{\begin{theorem}}
\newcommand\ethm{\end{theorem}}
\newcommand\bprf{\begin{proof}}
\newcommand\eprf{\end{proof}}
\newcommand\blem{\begin{lemma}}
\newcommand\elem{\end{lemma}}

\newcommand{\sm}{{\raise0.3ex\hbox{$\scriptstyle \setminus$}}}



\def\CHI{\mathchoice%
{\raise2pt\hbox{$\chi$}}%
{\raise2pt\hbox{$\chi$}}%
{\raise1.3pt\hbox{$\scriptstyle\chi$}}%
{\raise0.8pt\hbox{$\scriptscriptstyle\chi$}}}
\def\smalloplus{\raise1pt\hbox{$\,\scriptstyle \oplus\;$}}

\title{Hole probabilities for finite and infinite Ginibre ensembles }

\author{Kartick Adhikari}

\address{Department of Mathematics\\
        Indian Institute of Science\\
        Bangalore 560012, India}

\email{kartickmath@math.iisc.ernet.in}

\author{Nanda Kishore Reddy}

\address{Department of Mathematics\\
        Indian Institute of Science\\
        Bangalore 560012, India}

\email{kishore11@math.iisc.ernet.in}

\thanks{Partially supported by UGC Centre for Advanced Studies. Research of Nanda Kishore reddy is supported by CSIR-SPM fellowship, CSIR, Government of India.}

\begin{document}
\maketitle

\begin{abstract}
We study the hole probabilities of the infinite Ginibre ensemble ${\X}_{\infty}$, a determinantal point process on the complex plane   with the kernel $\mathbb K(z,w)= \frac{1}{\pi}e^{z\bar w-\frac{1}{2}|z|^2-\frac{1}{2}|w|^2}$ with respect to the Lebesgue measure on the complex plane. Let $U$ be an open subset of open unit disk $\D$ and ${\X}_{\infty}(rU)$ denote the number of points  of ${\X}_{\infty}$ that fall in $rU$. Then, under some  conditions on $U$, we  show that 
$$
\lim_{r\to \infty}\frac{1}{r^4}\log\P[\X_{\infty}(rU)=0]=R_{\emptyset}-R_{U},
$$
where $\emptyset$ is the empty set and 
$$
R_U:=\inf_{\mu\in \mathcal P(U^c)}\left\{\iint \log{\frac{1}{|z-w|}}d\mu(z)d\mu(w)+\int
|z|^2d\mu(z) \right\},
$$ 
$\mathcal P(U^c)$ is the space of all compactly supported  probability measures with support in $U^c$. Using potential theory, we give an explicit formula for $R_U$, the minimum possible energy of a probability measure compactly supported on $U^c$ under logarithmic potential with a quadratic external field. Moreover, we calculate $R_U$ explicitly for some special sets like annulus, cardioid, ellipse, equilateral triangle and half disk.  
\end{abstract}
\section{Introduction and main results}
Let  $\X$ be a point process (see \cite{verejones}, p. 7) on $\C$ and let $U$ be an open set in $\C$. The probability that $U$ contains no points of $\X$ is called {\it hole}/{\it gap  probability}  for $U$. Our main aim, in this paper,  is to compute   hole probabilities of finite and infinite Ginibre ensembles  for various open sets $U$, using potential theory. For earlier studies on hole probabilities for various other point processes, we refer the reader to  \cite{akemannhole}, \cite{nishry10}, \cite{nishry11}, \cite{nishry12}, \cite{sodin}. 

A $n\times n$ random matrix $G_{n}$, with i.i.d. standard  complex Gaussian entries, is called $n$-th complex \textit{Ginibre ensemble}.  The joint probability density function of the eigenvalues  of $G_n$ (see \cite{ginibre}, \cite{manjubook}, p. 60) is
\begin{equation*}\label{eqn:density}
\frac{1}{\pi^n \prod_{k=1}^{n} k!}e^{-\sum_{k=1}^{n}|z_k|^2}\prod_{i<j} |z_i-z_j|^2,
\end{equation*}
where $z_1,z_2,\ldots,z_n\in \C$. The point process $\X_n$, constituting the eigenvalues of $G_{n}$, is a   determinantal point process (see \cite{manjubook}, p. 48, \cite{andersonbook}, p. 215), as the joint density of the eigenvalues  can also be written as
 \begin{equation*}\label{eqn:density1}
\frac{1}{n!} \det \left(\mathbb K_{n}(z_i,z_j) \right)_{1 \le i,j \le n},
\end{equation*}
 with the kernel 
 $$\mathbb K_{n}(z,w)=\sum_{k=1}^{n} \varphi_k(z)\overline{\varphi_k(w)}, \mbox{  where 
 $\varphi_k(z)= \frac{1}{\sqrt{{\pi}(k-1)!}} z^{k-1} e^{-\frac{1}{2}|z|^2} $},$$
  with respect to the background measure $dm(z)$, where $m$ denotes the Lebesgue measure on the complex plane. 
  
  The {\it infinite Ginibre ensemble} $\X_{\infty}$ (see \cite{manjubook}, p. 60) on the complex plane is a determinantal point process  with the kernel $$\mathbb K(z,w)= \frac{1}{\pi}e^{z\overline{w} -\frac{1}{2}|z|^2-\frac{1}{2}|w|^2}$$ with respect to the background measure $dm(z)$. Since $\mathbb K_{n}(z,w)$ converges to $\mathbb K(z,w)$ as $n \to \infty$,   the  $n$-th complex Ginibre ensemble converges in distribution to infinite Ginibre ensemble as $n \to \infty$ (see \cite{verejones}, Theorem 11.1.VII).

\noindent{\bf Notation: }Define $rU:= \{rz|z \in U\}$. $\partial U$ denotes the boundary of $U$. The number of points of a point process $\X$ that fall in $U$ is denoted by $\X(U)$.  For  $E \subset \mathbb C$, $\mathcal P (E)$ denotes
the space of all compactly supported probability measures with support in $ E$. 

The main purpose of this paper is to compute  the   limits of hole probabilities for infinite Ginibre ensemble for a wide class of open sets $U$. It is known, about the hole probabilities for infinite Ginibre ensemble $\X_{\infty}$, that
$$
\lim_{r\to \infty}\frac{1}{r^4}\log\P[\X_{\infty}(r\D)=0]=-\frac{1}{4},
$$
where $\D$ is open unit disk. This has been computed in \cite{shirai} (see Theorem 1.1). An alternate proof of this has been obtained in \cite{manjubook}, Proposition 7.2.1. The key idea  in  the proof (\cite{manjubook}, Proposition 7.2.1) is  that the set of absolute values of the points of $\X_{\infty}$  has the same distribution as $\{R_1,R_2,\ldots \},$ where $R_k^2\sim \mbox{Gamma}(k,1)$ and all the $R_k$s are independent. This fact is due to Kostlan \cite{kostlan} (see Result \ref{lem:kostlan} below). By using the same idea we have the following result for annulus.

\begin{theorem}\label{thm:annulus1}
Let $U_c=\{z \;|\; c<|z|<1\}$ for fixed $0<c<1$. Then
$$
\lim_{r\to \infty}\frac{1}{r^4}\log \P[\X_{\infty}(rU_c)=0]=-\frac{(1-c^2)}{4}\cdot\left(1+c^2+\frac{1-c^2}{\log c}\right).
$$
\end{theorem}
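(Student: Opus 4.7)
The plan is to invoke Kostlan's description of the radial part of $\X_{\infty}$ together with large deviation estimates for the Gamma distribution, reducing the hole probability to a one-dimensional integral that can be evaluated in closed form.

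\emph{Step 1 (Reduction via Kostlan).} By Kostlan's result, the multiset $\{|z|\colon z\in \X_{\infty}\}$ has the same law as $\{R_k\}_{k\ge 1}$, where the $R_k$ are independent and $R_k^2\sim \mathrm{Gamma}(k,1)$. Writing $p_k(r):=\P[R_k^2\in(c^2 r^2, r^2)]$, the hole event at level $r$ says no $R_k$ lies in $(cr,r)$, so
$$\P[\X_{\infty}(rU_c)=0]=\prod_{k=1}^{\infty}\bigl(1-p_k(r)\bigr),\qquad \log\P[\X_{\infty}(rU_c)=0]=\sum_{k\ge 1}\log\bigl(1-p_k(r)\bigr).$$

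\emph{Step 2 (Large deviation asymptotics).} Set $\alpha=k/r^2$ and apply the standard LDP for $\mathrm{Gamma}(k,1)/k$ with rate function $f(t)=t-1-\log t$. For $\alpha\in(c^2,1)$ both $\P[R_k^2\le c^2 r^2]$ and $\P[R_k^2\ge r^2]$ are genuine large deviations and
$$1-p_k(r)\;\asymp\;e^{-r^2 g_1(\alpha)}+e^{-r^2 g_2(\alpha)},$$
where $g_1(\alpha):=c^2-\alpha+\alpha\log(\alpha/c^2)$ and $g_2(\alpha):=1-\alpha+\alpha\log\alpha$; both are nonnegative on $(c^2,1)$, vanishing at the respective endpoints. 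Thus $\log(1-p_k(r))\sim -r^2\min\{g_1(\alpha),g_2(\alpha)\}$. For $\alpha\notin[c^2,1]$, $p_k(r)$ is itself exponentially small in $r^2$, so those indices contribute $o(r^4)$ to the sum.

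\emph{Step 3 (Riemann sum and explicit computation).} Summing over the $\Theta(r^2)$ indices with $k/r^2\in(c^2,1)$ produces a Riemann sum, yielding
$$\lim_{r\to\infty}\frac{1}{r^4}\log\P[\X_{\infty}(rU_c)=0]=-\int_{c^2}^{1}\min\{g_1(\alpha),g_2(\alpha)\}\,d\alpha.$$
A direct check shows $g_1(\alpha)=g_2(\alpha)$ at exactly one point $\alpha^{*}=(1-c^2)/(-2\log c)\in(c^2,1)$, with $g_1\le g_2$ on $(c^2,\alpha^{*})$ and $g_2\le g_1$ on $(\alpha^{*},1)$. Computing antiderivatives of $g_1,g_2$ by parts (using $g_1(c^2)=g_2(1)=0$) and substituting the value of $\alpha^*$, the elementary algebra collapses to
$$\int_{c^2}^{1}\min\{g_1,g_2\}\,d\alpha=\frac{1-c^2}{4}\left(1+c^2+\frac{1-c^2}{\log c}\right),$$
which gives the stated limit.

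The main obstacle is making the large-deviation estimates in Step~2 uniform in $k$ across the full window $k\asymp r^2$, with errors $o(r^2)$ in each logarithmic factor, so that the Riemann-sum passage in Step~3 does not lose contributions at the leading $r^4$ order. The transition regions $\alpha\to c^2$ and $\alpha\to 1$, where $g_1$ and $g_2$ vanish and the sub-exponential prefactors are largest, require the most care; otherwise the argument is a calculation.
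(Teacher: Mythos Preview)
Your proposal is correct and follows essentially the same route as the paper. The paper also starts from Kostlan's factorization, bounds each factor $\P(R_k^2<c^2r^2)+\P(R_k^2>r^2)$ by Chernoff-type estimates whose exponents are exactly your $-r^2 g_1(\alpha)$ and $-r^2 g_2(\alpha)$ (Lemma~\ref{lem:iqn}), splits the range at the same crossover $\lambda=\alpha^{*}=-\tfrac{1-c^2}{2\log c}$, and then evaluates the resulting sums; your Riemann-sum formulation is a cleaner repackaging of the same computation, and the paper handles the uniformity and boundary contributions you flag by explicit upper/lower bounds together with a separate argument showing the indices $k\notin[c^2r^2,r^2]$ contribute only $e^{O(r^2)}$.
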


 Observe that  the above  idea cannot be  applied for non circular domains. So, to prove the main theorem of this paper (Theorem \ref{thm:infinite}), we resort to the computation of hole probabilities of finite Ginibre ensembles, as the infinite Ginibre ensemble is the distributional limit of finite Ginibre ensembles. Hole probability of $n$-th Ginibre ensemble for an open set $\sqrt{n} U$ is given by 
 \begin{align}\label{eqn:holeprob}
 \P[\X_n(\sqrt{n} U)=0]=\frac{1}{Z_n}\int_{ U^c}\ldots \int_{ U^c}  e^{-n\sum_{k=1}^{n}|z_k|^2}\prod_{i<j}|z_i-z_j|^2\prod_{i=1}^n dm(z_i),
 \end{align}
where $Z_n=n^{-\frac{n^2}{2}}{\pi^n \prod_{k=1}^{n} k!}$.  Circular law \cite{ginibre} tells us that the empirical eigenvalue distribution $\rho_n$  of $\frac{1}{\sqrt{n}}G_n$ converges to the uniform measure on unit disk $\D$ as $n \to \infty$. So, for  $U \subset \D$, $\P[\X_n(\sqrt{n} U)=0]$ converges to zero as $n \to \infty$. Observe  that $\P[\X_n(\sqrt{n} U)=0]=\P[\rho_n \in \mathcal P (U^c) ]$. Therefore by  Large deviation principle for the empirical eigenvalue distribution of Ginibre ensemble, proved in \cite{hiai}, we have an upper bound for the limits of hole probabilities,
 $$
\limsup_{n\to
\infty}\frac{1}{n^2}\log \P[\X_n(\sqrt{n} U)=0]\le-\inf_{\mu\in
\mathcal P (U^c)}R_{\mu}+\frac{3}{4},
$$
 where the rate function $R_{\mu}$ is  the following functional on $\mathcal P (\C)$
\begin{equation*}
R_{\mu}=\iint \log\frac{1}{|z-w|}d\mu(z)d\mu(w)+\int
|z|^2d\mu(z),
\end{equation*}
as the set $\mathcal P (U^c)$ is closed in $\mathcal P (\C)$ with weak topology. No non-trivial lower bound for hole probabilities  can be deduced from the large deviation principle, as the set $\mathcal P (U^c)$ has empty interior. See that, for $a \in U$ and $\mu \in\mathcal P (U^c)$, $(1-\frac{1}{n})\mu + \frac{1}{n} \delta_{a} \notin \mathcal P (U^c)$ for all $n$ and converges to $\mu$ as $n \to \infty$. Nonetheless we have the following lemma which gives a good  lower  bound for the hole probabilities of finite Ginibre ensembles.

\begin{lemma}\label{thm:holeprobability}
Let $U$ be a open subset of $\mathbb C$ and $\X_n$ be the point process of 
eigenvalues of ${G_n}$. Then
$$
\limsup_{n\to
\infty}\frac{1}{n^2}\log\P[\X_n(\sqrt{n} U)=0]\le-\inf_{\mu\in
\mathcal P (U^c)}R_{\mu}+\frac{3}{4},
$$
$$
\liminf_{n\to
\infty}\frac{1}{n^2}\log \P[\X_n(\sqrt{n} U)=0]\ge-\inf_{\mu\in
\mathcal A}R_{\mu}+\frac{3}{4},
$$
where 
$\mathcal A=\{\mu\in \mathcal P(\mathbb C): \mbox {dist(Supp($\mu$),$\overline{U}
$)$>0$}\}$.
\end{lemma}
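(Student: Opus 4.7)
\emph{Upper bound.} The event $\{\X_n(\sqrt nU)=0\}$ coincides with $\{\rho_n\in\mathcal P(U^c)\}$, where $\rho_n:=n^{-1}\sum_k\delta_{z_k/\sqrt n}$ is the rescaled empirical measure. Since $U$ is open, the map $\nu\mapsto\nu(U)$ is lower semicontinuous on $\mathcal P(\C)$ with the weak topology, so $\mathcal P(U^c)$ is closed. Applying the upper bound of the \cite{hiai} large deviation principle (speed $n^2$, rate function $R_\mu-\tfrac34$, the constant $\tfrac34=\inf R$ being the minimum, attained at the uniform measure on $\D$) to the closed set $\mathcal P(U^c)$ yields the stated $\limsup$.

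\emph{Lower bound.} Since $\mathcal P(U^c)$ has empty interior, instead of invoking the LDP I would mimic its lower-bound construction while ensuring that all approximating configurations lie in $U^c$. Fix $\mu\in\mathcal A$ with $R_\mu<\infty$ and set $2\delta:=\dist(\supp\mu,\overline{U})>0$. Convolving $\mu$ with a mollifier of radius $\eta<\delta/2$ yields $\mu_\eta\in\mathcal A$ with bounded density and compact support, still satisfying $\dist(\supp\mu_\eta,\overline{U})\ge\delta$ and $R_{\mu_\eta}\to R_\mu$ as $\eta\to0$. Partition $\supp\mu_\eta$ into $n$ cells $C_1,\ldots,C_n$ of equal $\mu_\eta$-mass $1/n$ and diameter $O(n^{-1/2})$ (possible thanks to the bounded density), pick $a_i^{(n)}\in C_i$, and take balls $B_i:=B(a_i^{(n)},r_n)$ with $r_n:=n^{-2}$; for large $n$ the $B_i$ are pairwise disjoint and $\bigcup_iB_i\subseteq U^c$. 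Then from \eqref{eqn:holeprob},
\begin{equation*}
\P[\X_n(\sqrt nU)=0]\ge\frac{1}{Z_n}\int_{B_1}\!\!\cdots\!\int_{B_n}e^{-n\sum_k|z_k|^2}\prod_{i<j}|z_i-z_j|^2\prod_i dm(z_i).
\end{equation*}
Standard Riemann-sum arguments give $n^{-1}\sum_i|a_i^{(n)}|^2\to\int|z|^2d\mu_\eta$ and $n^{-2}\sum_{i\ne j}\log|a_i^{(n)}-a_j^{(n)}|\to\iint\log|z-w|\,d\mu_\eta(z)d\mu_\eta(w)$, while Stirling applied to $Z_n=n^{-n^2/2}\pi^n\prod_k k!$ gives $n^{-2}\log Z_n\to-\tfrac34$. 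Combining these with the small-ball perturbation estimates $|z_k|^2=|a_k^{(n)}|^2+O(r_n)$, $|z_i-z_j|\ge|a_i^{(n)}-a_j^{(n)}|-2r_n$, and the volume factor $(\pi r_n^2)^n=e^{O(n\log n)}$, one obtains $\liminf n^{-2}\log\P[\X_n(\sqrt nU)=0]\ge-R_{\mu_\eta}+\tfrac34$. Sending $\eta\to0$ and then infimizing over $\mu\in\mathcal A$ closes the lower bound.

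\emph{Main obstacle.} The sharpest step is the convergence of the discrete logarithmic energy $n^{-2}\sum_{i\ne j}\log|a_i^{(n)}-a_j^{(n)}|$ to the continuum energy; the dangerous contributions come from pairs in neighboring cells, where $|a_i^{(n)}-a_j^{(n)}|\asymp n^{-1/2}$. Regularizing $\mu$ to have bounded density is precisely what controls those contributions, and choosing $r_n$ polynomially smaller than $n^{-1/2}$ keeps the perturbation bound $|z_i-z_j|\ge|a_i^{(n)}-a_j^{(n)}|-2r_n$ non-vacuous for them.
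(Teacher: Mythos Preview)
Your upper bound via the LDP of \cite{hiai} is correct (the paper mentions this route in the introduction, though its formal proof instead bounds the integrand by the weighted Fekete maximum $\delta_n^\omega(U^c)^{n(n-1)}$ and uses $\delta_n^\omega(U^c)\to e^{-R_U}$). Your lower bound, however, takes a genuinely different route from the paper's. Rather than discretising $\mu$, the paper inserts $\prod_k f(z_k)/M\le 1$ under the integral for any bounded density $f\le M$ compactly supported in $U^c$, applies Jensen's inequality to $\log\int(\cdot)\prod_k f(z_k)\,dm(z_k)$, and reads off $-R_\mu+\frac{3}{4}$ in one stroke, since the logarithm of the integrand is a sum of two-body terms whose expectation under $f^{\otimes n}$ factorises; the extensions to unbounded densities (truncate $f$ at height $M$) and then to arbitrary $\mu\in\mathcal A$ (convolve with the uniform measure on a small disc) are soft. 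Your discretisation scheme is closer in spirit to the paper's proof of Theorem~\ref{thm:holeprobability1}, where the discrete points are the weighted Fekete points and their separation is supplied by Lemma~\ref{lem:feketedistance}. Your version can be made rigorous, but the step you correctly flag as the obstacle---convergence of $n^{-2}\sum_{i\ne j}\log|a_i^{(n)}-a_j^{(n)}|$---requires choosing the $a_i^{(n)}$ with separation at least of order $n^{-1/2}$ (e.g.\ on a regular grid adapted to the bounded density), and even then the near-diagonal estimate is not quite a ``standard Riemann sum''. The Jensen trick sidesteps all of this.
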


Notice that $\mathcal P (U_{\epsilon}^c) \subset \mathcal A$ for every $\epsilon > 0$, where $U_{\epsilon}$ is $\epsilon$-neighbourhood  of $U$. This lemma requires us to study 
$$
R_U:=\inf_{\mu\in \mathcal P (U^c)}R_{\mu}
$$
for all open sets $U \subset \D$, to see if the upper and lower bounds in the above lemma match.  The measure for which this infimum is attained is called {\it  equilibrium measure} under logarithmic potential with quadratic  external field.  If $U=\emptyset$, then it is known \cite{allez}, \cite{armstrong} that the equilibrium measure is the uniform probability measure on the unit disk and $R_{\emptyset}$ is $\frac{3}{4}$.
For the sake of completeness, we provide a proof of this in Section \ref{proof}.   For a class of open sets $U$ with certain boundary conditions, equilibrium measures have already  been described in \cite{armstrong}. 
Our next result, using a formulation different from that of \cite{armstrong}, provides a  formula to compute the constant $R_U$ for any open set ${U} \subseteq {\mathbb D}$  and describes the equilibrium measure in terms of the balayage measure on $\partial U$, the definition of which is as follows. 

\noindent{\bf Balayage measure: } For a bounded open set $U$, there exists a  unique measure $\mu$ on $\partial U$ such that $\mu(B)=0$ for every   Borel polar set $B \subset \C$ and 
\begin{eqnarray*}
\int_{\partial U}\log \frac{1}{|z-w|}d\mu(w)=\frac{1}{\pi}\int_{{U}}\log \frac{1}{|z-w|}dm(w)\mbox{ for quasi-every } z\in {{\mathbb C}}\backslash {U}.
\end{eqnarray*} 
$\mu$ is said to be the {\it balayage} measure associated with  measure  $\frac{1}{\pi}m$ on $ U$. In this paper we simply call it the {\it balayage } measure on $\partial {U}$.  For discussion on the balayage measures associated with general measures on $\mathbb{C}$, we refer the  reader to  \cite{totikbook} (p. 110).
 
\begin{theorem}\label{thm:generalformula}
Let $U$ be an open set such that ${U} \subseteq {\mathbb D}$.  Then the equilibrium measure for $U^c$, under logarithmic potential with quadratic  external field, is
$\nu=\nu_1+\nu_2$ and
$$
R_U=\frac{3}{4}+\frac{1}{2}\left[\int_{\partial U}|z|^2d\nu_2(z)-\frac{1}{\pi}\int_{{U}}|z|^2dm(z)\right],
$$
where 
\begin{eqnarray*}
d\nu_1(z)&=&\left\{\begin{array}{lr}
\frac{1}{\pi}dm(z)& \mbox{  if  } z\in\overline{\mathbb D}\backslash {U}\\0& \mbox{o.w.}
\end{array}\right.
\end{eqnarray*}
and  $\nu_2$ is the balayage measure on $\partial U$. 
\end{theorem}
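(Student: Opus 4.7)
The plan is to verify that $\nu := \nu_1 + \nu_2$ satisfies the Frostman variational characterization of the equilibrium measure on $U^c$ with external field $Q(z)=|z|^2$, and then to evaluate $R_U=R_\nu$ from the resulting Frostman constant. A measure $\nu\in\mathcal{P}(U^c)$ is the unique minimizer of $R_\mu$ exactly when there is a constant $F$ with $2U^{\nu}(z)+|z|^2 \ge F$ q.e.\ on $U^c$, with equality q.e.\ on $\mathrm{supp}(\nu)$; here I write $U^{\mu}(z):=\int\log\frac{1}{|z-w|}\,d\mu(w)$ for the logarithmic potential.

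The key reduction is that the balayage property makes the potential of $\nu$ agree, off $U$, with the potential of the uniform probability measure $\sigma:=\frac{1}{\pi}m|_{\bar{\mathbb{D}}}$ on the closed unit disk. Indeed, from the definition of balayage given in the excerpt,
\begin{equation*}
U^{\nu_2}(z) \;=\; \frac{1}{\pi}\int_{U}\log\frac{1}{|z-w|}\,dm(w) \quad\text{for q.e. } z\in\mathbb{C}\setminus U,
\end{equation*}
and since $\nu_1=\frac{1}{\pi}m|_{\bar{\mathbb{D}}\setminus U}$, adding the two identities yields $U^{\nu}(z)=U^{\sigma}(z)$ for q.e.\ $z\in\mathbb{C}\setminus U$. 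A direct mean-value-type calculation (the same one used in the derivation of $R_\emptyset=\tfrac34$) gives $U^{\sigma}(z)=\tfrac{1}{2}(1-|z|^2)$ for $|z|\le 1$ and $U^{\sigma}(z)=-\log|z|$ for $|z|\ge 1$. Since $U\subseteq\mathbb{D}$ we have $\mathrm{supp}(\nu)\subseteq\bar{\mathbb{D}}$, so $2U^{\nu}(z)+|z|^2=1$ q.e.\ on $\mathrm{supp}(\nu)$, while on $\mathbb{C}\setminus\bar{\mathbb{D}}$, $2U^{\nu}(z)+|z|^2=-2\log|z|+|z|^2\ge 1$ by elementary calculus. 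This identifies $\nu$ as the equilibrium measure with Frostman constant $F=1$.

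To compute $R_U$, integrate the Frostman equality $U^{\nu}(z)=\tfrac{1}{2}(1-|z|^2)$ (which holds $\nu$-a.e., since $\nu$ assigns no mass to polar sets) against $\nu$:
\begin{equation*}
R_\nu \;=\; \int U^{\nu}\,d\nu + \int|z|^2\,d\nu \;=\; \tfrac{1}{2} + \tfrac{1}{2}\int|z|^2\,d\nu.
\end{equation*}
Splitting $\int|z|^2\,d\nu=\tfrac{1}{\pi}\int_{\bar{\mathbb{D}}\setminus U}|z|^2\,dm+\int_{\partial U}|z|^2\,d\nu_2$ and using $\tfrac{1}{\pi}\int_{\bar{\mathbb{D}}}|z|^2\,dm=\tfrac{1}{2}$ rearranges to the stated formula.

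The main technical care lies in handling the polar-set exceptions in the balayage identity: one has to check that the Frostman equality actually holds on a set of full $\nu$-measure, which follows since $\nu_2$ does not charge polar sets (by its defining property) and $\nu_1$ is absolutely continuous with respect to Lebesgue. The inequality outside the closed unit disk is routine because $t\mapsto -2\log t + t^2$ is increasing on $[1,\infty)$ with value $1$ at $t=1$; thus the only real content is the balayage reduction that collapses $U^{\nu_1}+U^{\nu_2}$ to $U^{\sigma}$.
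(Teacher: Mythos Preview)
Your proposal is correct and follows essentially the same approach as the paper: reduce the potential $U^{\nu}$ to that of the uniform disk measure $\sigma$ via the balayage identity $U^{\nu_2}=U^{\mu_2}$ q.e.\ on $U^c$, invoke the known formula $U^{\sigma}(z)=\tfrac12(1-|z|^2)$ on $\overline{\mathbb D}$ to verify the Frostman (variational) conditions, and then integrate the Frostman equality against $\nu$ to obtain $R_U$. The paper's only additional explicit step is the check that $I_\nu<\infty$ (needed in the characterization of the equilibrium measure), which follows immediately from your displayed formula $R_\nu=\tfrac12+\tfrac12\int|z|^2\,d\nu$; otherwise the arguments are identical up to the normalization $Q=|z|^2/2$ versus your $2U^{\nu}+|z|^2$.
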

 The above lemma and theorem give us limits of hole probabilities of finite Ginibre ensembles for a certain class of sets $U$. 

\begin{theorem}\label{thm:holeprobability12}
Let $\X_n$ be the point process of 
eigenvalues of ${G_n}$ and let $U\subseteq \D$ be an open set  such that
there exists a sequence of open sets ${U_n}$ such that $\overline
U \subset U_n  \subseteq \D$ for all $n$ and the balayage measure $\nu_n$
on $\partial U_n$ converges weakly to the balayage measure $\nu$ on
$\partial U$.
Then
$$
\lim_{n\to
\infty}\frac{1}{n^2}\log\P[\X_n(\sqrt{n}U)=0]=R_{\emptyset}-R_U.
$$
\end{theorem}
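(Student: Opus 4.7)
The plan is to combine the two bounds in Lemma~\ref{thm:holeprobability} with the explicit formula of Theorem~\ref{thm:generalformula}. The upper bound $\limsup \frac{1}{n^2}\log\P[\X_n(\sqrt{n}U)=0]\le R_{\emptyset}-R_U$ is already in the desired form, so everything reduces to showing that the lower bound
\[
\liminf_{n\to\infty}\frac{1}{n^2}\log\P[\X_n(\sqrt{n}U)=0]\ge R_{\emptyset}-\inf_{\mu\in\mathcal A}R_{\mu}
\]
actually equals $R_{\emptyset}-R_U$, i.e.\ that $\inf_{\mu\in\mathcal A}R_{\mu}=R_U$.

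Since every $\mu\in\mathcal A$ is supported in $U^c$, the inclusion $\mathcal A\subseteq\mathcal P(U^c)$ gives $\inf_{\mathcal A}R_{\mu}\ge R_U$ for free. For the reverse direction I would exploit the auxiliary sets $U_n$ supplied by the hypothesis: because $\overline U$ is compact (it sits inside $\mathbb D$), $U_n$ is open, and $\overline U\subset U_n$, one has $\operatorname{dist}(\overline U,U_n^c)>0$, so every $\mu\in\mathcal P(U_n^c)$ lies in $\mathcal A$. Consequently $\inf_{\mathcal A}R_{\mu}\le R_{U_n}$ for every $n$, and it suffices to prove $R_{U_n}\to R_U$.

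To do this I would apply Theorem~\ref{thm:generalformula} to both $U$ and $U_n$; writing $\nu_{n,2}$ for the balayage measure on $\partial U_n$ and $\nu_2$ for the balayage measure on $\partial U$,
\[
R_{U_n}-R_U=\frac{1}{2}\int|z|^2\,d\nu_{n,2}(z)-\frac{1}{2}\int|z|^2\,d\nu_2(z)-\frac{1}{2\pi}\int_{U_n\setminus U}|z|^2\,dm(z).
\]
All of $\nu_{n,2}$ and $\nu_2$ are supported in the compact set $\overline{\mathbb D}$, on which $|z|^2$ is bounded and continuous, so the hypothesized weak convergence $\nu_{n,2}\to\nu_2$ drives the first difference to zero. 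For the last term, letting $z\to\infty$ in the defining identity for the balayage measure shows $\nu_{n,2}(\partial U_n)=m(U_n)/\pi$ (and similarly for $\nu_2$); testing the weak convergence against the constant function $1$ on $\overline{\mathbb D}$ therefore yields $m(U_n)\to m(U)$, hence $m(U_n\setminus U)\to 0$, and the bound $|z|^2\le 1$ on $\overline{\mathbb D}$ kills the last term. Chaining $R_U\le\inf_{\mathcal A}R_{\mu}\le R_{U_n}$ and passing to the limit gives $\inf_{\mathcal A}R_{\mu}=R_U$, and the theorem follows.

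The main obstacle is the continuity statement $R_{U_n}\to R_U$; the hypothesis is engineered precisely so that the potential-theoretic description of the equilibrium measure in Theorem~\ref{thm:generalformula} can be converted into a weak-convergence computation. A subtle point is that $|z|^2$ is not compactly supported on $\mathbb C$, so weak convergence may only be invoked because the balayage measures all live on the common compact set $\overline{\mathbb D}$; a secondary check is the total-mass identity $\nu_{n,2}(\partial U_n)=m(U_n)/\pi$, without which the Lebesgue-measure convergence $m(U_n)\to m(U)$ cannot be extracted from the weak convergence alone.
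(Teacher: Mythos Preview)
Your proof is correct and follows essentially the same route as the paper's: both combine the two bounds of Lemma~\ref{thm:holeprobability}, use $\mathcal P(U_n^c)\subset\mathcal A$ to sandwich $\inf_{\mathcal A}R_\mu$ between $R_U$ and $R_{U_n}$, and invoke Theorem~\ref{thm:generalformula} together with the assumed weak convergence of balayage measures to conclude $R_{U_n}\to R_U$. Your write-up is in fact more careful than the paper's on two points it leaves implicit: you justify why weak convergence applies to the unbounded function $|z|^2$ (all measures live in $\overline{\mathbb D}$) and you explain why the Lebesgue term $\frac{1}{\pi}\int_{U_n}|z|^2\,dm$ converges, via the total-mass identity $\nu_{n,2}(\partial U_n)=m(U_n)/\pi$ (which is also recorded in Fact~\ref{ft:balayage}) and the inclusion $U\subset U_n$.
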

 
The second class of sets $U$ we consider satisfy the exterior ball condition, i.e.,
there exists $\epsilon > 0$  such that for every $z\in \partial U$ there exists a 
$\eta\in U^c$ such that
\begin{eqnarray}\label{eqn:condition}
U^c\supset B(\eta,\epsilon)\;\; \mbox{and
}\; |z-\eta |= \epsilon.
\end{eqnarray}
 Note that all convex domains satisfy the
condition  \eqref{eqn:condition}. Annulus is not a convex domain but
it satisfies the condition \eqref{eqn:condition}. The following theorem gives hole probabilities for such open sets.

\begin{theorem}\label{thm:holeprobability1}
Let $\X_n$ be the point process of eigenvalues of
$G_n$ and let $U\subseteq \D$ be an open set satisfying   condition
\eqref{eqn:condition}. Then
$$
\lim_{n\to \infty}\frac{1}{n^2}\log\P[\X_n(\sqrt{n}U)=0]=R_{\emptyset}-R_U ,
$$
where  $\X_n(U)$ denotes the number of points of $\X_n$ that fall in $U$.
\end{theorem}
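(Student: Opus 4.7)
The upper bound in the theorem is immediate from the first inequality of Lemma \ref{thm:holeprobability} together with $R_\emptyset = 3/4$. For the matching lower bound, the second inequality of the same lemma reduces the task to proving
\[
\inf_{\mu \in \mathcal A} R_\mu \;\le\; R_U, \qquad \mathcal A=\{\mu\in \mathcal P(\C):\ \dist(\supp \mu,\overline U)>0\};
\]
the reverse inequality is automatic from $\mathcal A\subset \mathcal P(U^c)$. So I must approximate the equilibrium measure $\nu=\nu_1+\nu_2$ of Theorem \ref{thm:generalformula} --- which typically places mass on $\partial U$ through its balayage component $\nu_2$ --- by probability measures whose supports stay at uniformly positive distance from $\overline U$, with no loss of energy in the limit.

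\textbf{Construction of approximants.} For small $\delta > 0$ consider the open thickening $U_\delta := \{z \in \C : \dist(z,U) < \delta\}$. Elementary topology gives $\overline U \subset U_\delta$ and $\dist(\C\setminus U_\delta, \overline U) \ge \delta$. For $\delta < \epsilon$ the exterior ball condition \eqref{eqn:condition} is inherited by $U_\delta$ with radius $\epsilon - \delta$, so the thickening is geometrically well-controlled. After a preliminary reduction to the case $\overline U\subset \D$ (a harmless technical step), Theorem \ref{thm:generalformula} applies to $U_\delta$ and identifies its equilibrium measure as $\mu_\delta = \tfrac{1}{\pi}m|_{\overline\D\setminus U_\delta} + \nu_2^{(\delta)}$, where $\nu_2^{(\delta)}$ is the balayage of $\tfrac{1}{\pi}m|_{U_\delta}$ on $\partial U_\delta$. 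Since $\supp \mu_\delta \subseteq \C\setminus U_\delta$, we have $\mu_\delta\in \mathcal A$ and $R_{\mu_\delta}=R_{U_\delta}$.

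\textbf{Energy convergence.} Applying Theorem \ref{thm:generalformula} to both $U$ and $U_\delta$ yields
\[
R_{U_\delta}-R_U \;=\; \frac{1}{2}\int |z|^2\,d(\nu_2^{(\delta)}-\nu_2)(z) \;-\; \frac{1}{2\pi}\int_{U_\delta\setminus U}|z|^2\,dm(z).
\]
The second term vanishes in the limit since $m(U_\delta\setminus U)\to 0$. For the first, it suffices to establish weak convergence $\nu_2^{(\delta)} \to \nu_2$; combined with support in the fixed compact $\overline \D$, this gives convergence of the quadratic moment. To obtain weak convergence, I would first verify locally uniform convergence of logarithmic potentials on $\C\setminus \overline U$ via the balayage identity $\int \log|z-w|^{-1}\,d\nu_2^{(\delta)}(w)=\tfrac{1}{\pi}\int_{U_\delta}\log|z-w|^{-1}\,dm(w)$ and dominated convergence; then, using tightness, extract a weakly convergent subsequence and identify its limit as $\nu_2$ by uniqueness of measures with prescribed logarithmic potential, noting that total masses $\nu_2^{(\delta)}(\C)=\tfrac{1}{\pi}m(U_\delta)$ converge to $\nu_2(\C)=\tfrac{1}{\pi}m(U)$.

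\textbf{Main obstacle.} The delicate step is promoting pointwise convergence of balayage potentials to weak convergence of the balayage measures themselves --- pointwise potential convergence alone does not in general suffice. The uniform exterior ball radius $\epsilon$ in \eqref{eqn:condition} is exactly what rescues this: it forces Hausdorff convergence $\partial U_\delta \to \partial U$ with no degeneration of geometry as $\delta\to 0$, prevents the balayage measures from developing point masses or concentrating along spikes, and thereby supplies the tightness and boundary regularity needed for the potential uniqueness theorem to pin down the weak limit as $\nu_2$.
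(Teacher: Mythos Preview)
Your route is genuinely different from the paper's, and it has two real gaps.

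\textbf{What the paper actually does.} The lower bound in the paper does not go through Lemma~\ref{thm:holeprobability} at all. It bounds the integral~\eqref{eqn:feket} from below directly: restrict each variable $z_k$ to a tiny ball $B_k=U^c\cap B(z_k^*,C/n^4)$ around the $k$-th weighted Fekete point $z_k^*$ for $U^c$, use the separation estimate $|z_i^*-z_j^*|\ge C/n^3$ of Lemma~\ref{lem:feketedistance} to control $\prod_{i<j}|z_i-z_j|$ from below on $\prod_k B_k$, and then invoke~\eqref{eqn:limit}. Condition~\eqref{eqn:condition} is used in two purely geometric ways: in the proof of Lemma~\ref{lem:feketedistance} (to place an auxiliary disk in $U^c$ near any Fekete point that lies close to $\partial U$), and to guarantee that each $B_k$ has area at least of order $n^{-8}$. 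There are no thickenings $U_\delta$, no balayage convergence, and no appeal to Theorem~\ref{thm:generalformula}.

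\textbf{Gap (i): the reduction to $\overline U\subset\D$ is not harmless.} This is precisely the feature that distinguishes Theorem~\ref{thm:holeprobability1} from Theorem~\ref{thm:holeprobability12}, as the paper says explicitly. When $\overline U$ meets $\partial\D$ (e.g.\ $U=\D$ itself, which satisfies~\eqref{eqn:condition}), every thickening $U_\delta$ leaves $\D$, Theorem~\ref{thm:generalformula} is no longer available for $U_\delta$, and your formula for $R_{U_\delta}$ disappears. I do not see an easy reduction: replacing $U$ by $(1-\epsilon)U$ gives the hole-probability inequality in the wrong direction, and enlarging $U$ only pushes it further outside $\D$.

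\textbf{Gap (ii): the balayage limit is asserted, not proved.} Even granting $\overline U\subset\D$, you identify the weak subsequential limit of $\nu_2^{(\delta)}$ with $\nu_2$ via ``uniqueness of measures with prescribed logarithmic potential'', but you only have equality of potentials on $\C\setminus\overline U$. That alone is not sufficient: you must extend the equality across $U$ (both potentials are harmonic there, but matching them on $\partial U$ requires boundary regularity) and check that the limit does not charge polar subsets of $\partial U$. Condition~\eqref{eqn:condition} very plausibly supplies the needed regularity, but the sentence about Hausdorff convergence ``preventing point masses'' is a heuristic, not an argument. In effect you are trying to show that~\eqref{eqn:condition} implies the hypothesis of Theorem~\ref{thm:holeprobability12}; even if that is carried out, it still leaves gap~(i) standing.
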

 
 The above theorem doesn't include cases of cardioid or sectors with obtuse angle at centre. Theorem \ref{thm:holeprobability12} takes care of these and all the other sets $U$ which can contain scaled-down copies of themselves. But Theorem \ref{thm:holeprobability12}, unlike Theorem \ref{thm:holeprobability1}, requires the boundary of $U$ to not intersect the unit circle.  The proof of Theorem \ref{thm:holeprobability1} makes use of Fekete points, whereas that of Theorem  \ref{thm:holeprobability12} makes use of the balayage measure. We generalize   Theorem \ref{thm:holeprobability12} and  Theorem  \ref{thm:holeprobability1} to the case of infinite Ginibre ensemble.

 \begin{theorem}\label{thm:infinite}
Let $\X_\infty$ be the point process of infinite Ginibre ensemble and let $U$ be an open subset of $\mathbb D$ which satisfies the conditions in the hypothesis of Theorem \ref{thm:holeprobability12} or that of Theorem  \ref{thm:holeprobability1}. Then
$$
\lim_{n\to \infty}\frac{1}{r^4}\log\P[\X_{\infty}(rU)=0]=R_{\emptyset}-R_U,
$$
where  $\X_{\infty}(U)$ denotes the number of points of $\X_{\infty}$ that fall in $U$.
\end{theorem}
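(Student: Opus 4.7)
The plan is to transfer the finite Ginibre theorems \ref{thm:holeprobability12} and \ref{thm:holeprobability1} to the infinite ensemble by decomposing $\X_\infty$ as an independent union of $\X_n$ and a tail process. Since $\mathbb K$ is the orthogonal projection onto $\overline{\mathrm{span}}(\varphi_k)_{k\ge 1}$, the splitting $\mathbb K = \mathbb K_n + (\mathbb K-\mathbb K_n)$ is an orthogonal decomposition of projections, so the standard splitting theorem for projection determinantal processes (see \cite{manjubook}) yields $\X_\infty \eqd \X_n\sqcup \X^{>n}$, where $\X^{>n}$ is an independent determinantal process with kernel $\mathbb K-\mathbb K_n$. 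Consequently
$$
\P[\X_\infty(rU)=0]=\P[\X_n(rU)=0]\cdot\P[\X^{>n}(rU)=0].
$$

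For the upper bound I use $\P[\X^{>n}(rU)=0]\le 1$ with $n=\lceil r^2\rceil$; writing $rU=\sqrt{n}\,V_r$ with $V_r=(r/\sqrt{n})\,U\to U$, the finite Ginibre result applied to $V_r$, combined with continuity of $R_V$ in $V$ (supplied by the hypothesis of Theorem \ref{thm:holeprobability12} or \ref{thm:holeprobability1}), gives $\limsup_{r\to\infty} r^{-4}\log \P[\X_\infty(rU)=0]\le R_\emptyset-R_U$. For the lower bound I fix $\delta>0$ and choose $n_r=\lceil (1+\delta)r^2\rceil$. The key point is that the tail modes $\varphi_k$ with $k>n_r$ are concentrated near $|z|^2\approx k$, so they hardly affect $rU\subset r\D$: indeed $(\mathbb K-\mathbb K_{n_r})(z,z)=\pi^{-1}\P[\Poi(|z|^2)\ge n_r-1]$, and for $|z|\le r$ a Chernoff bound gives $(\mathbb K-\mathbb K_{n_r})(z,z)\le \pi^{-1}e^{-h(\delta)r^2}$ with $h(\delta)=(1+\delta)\log(1+\delta)-\delta>0$. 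Hence $\E[\X^{>n_r}(rU)]\le Cr^2 e^{-h(\delta)r^2}$, and Markov's inequality forces $r^{-4}\log\P[\X^{>n_r}(rU)=0]\to 0$. Applying Theorem \ref{thm:holeprobability12} or \ref{thm:holeprobability1} to $\P[\X_{n_r}(rU)=0]=\P[\X_{n_r}(\sqrt{n_r}\,V_{n_r,r})=0]$ with $V_{n_r,r}=(r/\sqrt{n_r})U\to U/\sqrt{1+\delta}$ and using $n_r^2/r^4\to (1+\delta)^2$ gives
$$
\liminf_{r\to\infty}\frac{1}{r^4}\log\P[\X_\infty(rU)=0]\ge (1+\delta)^2\bigl(R_\emptyset-R_{U/\sqrt{1+\delta}}\bigr),
$$
and sending $\delta\downarrow 0$ using continuity of $R_V$ recovers the matching lower bound $R_\emptyset-R_U$.

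The main technical obstacle is justifying the continuity of $V\mapsto R_V$ along the scaling families $tU$ as $t\to 1$. Under Theorem \ref{thm:holeprobability1}'s exterior ball hypothesis this is essentially automatic, since the condition is scale invariant and the underlying Fekete-point constructions go through uniformly. Under Theorem \ref{thm:holeprobability12}'s balayage hypothesis one verifies that the approximating sequence $\{U_k\}$ may itself be rescaled by $1/\sqrt{1+\delta}$ to approximate $tU$, whereupon the balayage measures still converge weakly; the formula of Theorem \ref{thm:generalformula} then makes continuity of $R_V$ along $V=tU$ explicit via continuity of $\int |z|^2\, d\nu_2^V$ in $V$. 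Alternatively one sandwiches $V_{n_r,r}$ between the fixed sets $(1\pm\varepsilon)U/\sqrt{1+\delta}$, applies the finite Ginibre theorem to these, and invokes continuity only at the end. With this in hand the two matching bounds close the argument.
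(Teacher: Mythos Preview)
Your factorization
\[
\P[\X_\infty(rU)=0]=\P[\X_n(rU)=0]\cdot\P[\X^{>n}(rU)=0]
\]
rests on the claim that a projection determinantal process splits as an \emph{independent} superposition along an orthogonal decomposition $\Kdet=\Kdet_n+(\Kdet-\Kdet_n)$ of its kernel. This is false. If $K_1,K_2$ are orthogonal projections with $K_1K_2=0$, the DPP with kernel $K_1+K_2$ is not the independent union of the DPPs with kernels $K_1$ and $K_2$: already for two rank-one pieces $K_i(x,y)=\varphi_i(x)\overline{\varphi_i(y)}$, the two-point function of the sum is $|\varphi_1(x)\varphi_2(y)-\varphi_2(x)\varphi_1(y)|^2$, not $|\varphi_1(x)|^2|\varphi_2(y)|^2+|\varphi_2(x)|^2|\varphi_1(y)|^2$. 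In the paper's notation, your factorization would force $\det M_n(rU)=\det M_m(rU)\cdot\det D_{m,n}(rU)$ with $D_{m,n}$ the lower-right $(n-m)\times(n-m)$ block of $M_n(rU)$, i.e.\ $M_n(rU)$ would have to be block-diagonal. That happens for rotationally symmetric $U$ (the paper notes $M_n(r\D)$ is diagonal), but not for general $U$.

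The paper confronts exactly this obstruction. It first proves the monotonicity $\det M_n(rU)\downarrow\P[\X_\infty(rU)=0]$, which already yields the upper bound $\P[\X_\infty(rU)=0]\le\P[\X_{2r^2}(rU)=0]$ without any splitting. For the lower bound it uses the Schur-complement identity $\det M_n=\det M_{2r^2}\cdot\det[M_n/M_{2r^2}]$ (this is the correct replacement for your product formula), then invokes the operator inequality $M_n(rU)\ge M_n(r\D)$ to get $[M_n(rU)/M_{2r^2}(rU)]\ge[M_n(r\D)/M_{2r^2}(r\D)]$, and finally exploits the diagonality in the disk case to obtain $\det[M_n(r\D)/M_{2r^2}(r\D)]=\prod_{k>2r^2}\int_{(r\D)^c}|\varphi_k|^2\ge C$. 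This last product is essentially your ``tail modes don't see $r\D$'' estimate, but it enters through the Schur complement rather than through an independence that is not available. The paper then fixes $n=2r^2$ so that $rU=\sqrt{n}\,(U/\sqrt2)$ exactly and finishes via the scaling identity $R'_{aU}=a^4R'_U$ of Theorem~\ref{thm:generalformula}, which also sidesteps the continuity-in-$V$ issues you flagged.
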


The paper is organized as follows. In Section \ref{results}, we give exact values of the constant $R_U$ and the balayage measure $\nu_2$ for some particular open sets $U$.  Assuming  Theorem \ref{thm:generalformula}, Theorem \ref{thm:holeprobability12} and Theorem \ref{thm:holeprobability1},  we give proof of Theorem \ref{thm:infinite} in Section \ref{proov}.  In Section \ref{proof}, we give proof of Theorem \ref{thm:generalformula}. In Section \ref{gap}, we give proofs of Theorems \ref{thm:holeprobability12} and \ref{thm:holeprobability1}. We show  explicit calculations for finding the constant $R_U$ and the balayage measure $\nu_2$ in Section \ref{example}. In Section \ref{sec:annulus}, we prove Theorem \ref{thm:annulus1}.

\section{Table of examples}\label{results}
 Suppose $\nu $ is the
equilibrium measure for $\overline{\mathbb D}\backslash U$ as in Theorem \ref{thm:generalformula}. Then the equilibrium measure is
$\nu=\nu_1+\nu_2$, where
 $$
d\nu_1(z)=\left\{\begin{array}{lr}\frac{1}{\pi}dm(z) & \mbox{if
$z\in  \overline{ \mathbb D} \backslash {U}$}\\0& \mbox{o.w.}\end{array}\right.,
 $$
 and $\nu_2$
is the balayage measure on $\partial U$. Let $R_U'=\frac{1}{2}\left[\int_{\partial U}|z|^2d\nu_2(z)-\frac{1}{\pi}\int_{{U}}|z|^2dm(z)\right]$, then $R_U=\frac{3}{4}+R_U'$. Note that $R_U'$ satisfies the scaling relation, $R_{aU}'=a^4R_U'$.
The balayage measure $\nu_2$ and $R_U'$, for some particular
open sets $U$, are given in the following table.

\begin{center}
\begin{tabular}{ | m{3.5cm} | m{6.5cm}| m{2.6cm} | }
\hline
\begin{center}{\bf $U$}\end{center} & \begin{center}{\bf $\nu_2$}\end{center} & \begin{center}{\bf $R_{U}'$}\end{center} \\
 \hline	
 $\{z:|z|<a\}$ (disk).&
$d\nu_2(z)=\left\{\begin{array}{lr}\frac{a^2}{2\pi}d\theta &
\mbox{if $z=ae^{i\theta}$}\\0& \mbox{o.w.}\end{array}\right.$
& \begin{center}$\frac{a^4}{4}$\end{center}	\\
\hline
$\{z:|z-a_0|<a\}$ \mbox{for fixed $a_0\in \mathbb D .$}
&$d\nu_2(z)=\left\{\begin{array}{lr}\frac{a^2}{2\pi}d\theta &
\mbox{if $z=a_0+ae^{i\theta}$}\\0& \mbox{o.w.}\end{array}\right.$
& \begin{center}$\frac{a^4}{4}$\end{center}\\
\hline
$\{z:a<|z|<b\}$ for $0<a<b<1,$ (annulus).&
 $ d\nu_2'(z) =\left\{\begin{array}{lr}
\lambda(b^2-a^2)\frac{d\theta}{2\pi}&\mbox{if } z=ae^{i\theta}\\0&
\mbox{o.w.}
\end{array}\right.$,
$d\nu_2''(z)$ $= \left\{\begin{array}{lr}
(1-\lambda)(b^2-a^2)\frac{d\theta}{2\pi}&\mbox{if } z=be^{i\theta}\\0&
\mbox{o.w.}
\end{array}\right.$ where
$\lambda = \frac{(b^2-a^2)-2a^2\log(b/a)}{2(b^2-a^2)\log(b/a)}$
and $\nu_2=\nu_2'+\nu_2''$.
& $\frac{1}{4}(b^4-a^4)$ $-\frac{1}{4}\frac{(b^2-a^2)^2}{\log(b/a)}$ \\
\hline	
$\{(x,y) |
\frac{x^2}{a^2}+\frac{y^2}{b^2}< 1\}$ (ellipse).
&$d\nu_2(z)=\frac{ab}{2\pi}\left[1-{\frac{a^2-b^2}{a^2+b^2}}\cos(2\theta)\right]d\theta$
when $z\in \partial U$&
$\frac{1}{2}\cdot\frac{(ab)^3}{a^2+b^2}$ \\
\hline
$\{re^{i\theta}|0\leq r< b(1+2a\cos\theta), 0\le \theta\le
2\pi\}$ (Cardioid). & $d\nu_2(z)=\frac{b^2}{2\pi}(1+a^2+2a\cos
\theta)d\theta
$ when $z\in \partial U$. & $\frac{b^4}{2}(a^2+1)^2-\frac{b^4}{4}$\\
\hline
Fix $a<1$. $aT$ where $T$ be triangle with cube roots of
unity $1,\omega,\omega^2$ as vertices. &
\begin{center}$\ldots$\end{center}
&$\frac{a^4}{2\pi}\cdot \frac{9\sqrt{3}}{80}$\\
\hline
$\{re^{i\theta}:0<r<a,0<\theta<\pi\}$, (half-disk).
&\begin{center}$\ldots$\end{center}  &
$\frac{a^4}{2}\left(\frac{1}{2}-\frac{4}{\pi^2}\right)$\\
  \hline  
\end{tabular}
\end{center}
In all the above  examples, the parameters $a,b$  are such that $U \subseteq D$.  For detailed calculations, see Section \ref{example}.

\section{proof of Theorem \ref{thm:infinite}}\label{proov}

We assume Theorem \ref{thm:generalformula}, Theorem \ref{thm:holeprobability12} and Theorem \ref{thm:holeprobability1} in order to prove Theorem \ref{thm:infinite}. 

\begin{proof}[Proof of Theorem \ref{thm:infinite}]
Since the $n$-th Ginibre ensemble converges in distribution to infinite Ginibre ensemble  as $n \to \infty$, we have that 
$$
\P[\X_{\infty}(rU)=0]=\lim_{n \to \infty} \P [\X_{n}(rU)=0].
$$
Using the determinantal nature of density of eigenvalues of $G_n$, we get 
\begin{align*}
&\P[\X_n(rU)=0]
\\&=\frac{1}{n!}\int_{(rU)^c}\cdots \int_{(rU)^c}\det (K_n(z_i,z_j))_{1\le i,j\le n}\prod_{i=1}^ndm(z_i)
\\&=\frac{1}{n!}\int_{(rU)^c}\cdots \int_{(rU)^c}\det (\varphi_k(z_i))\det (\overline{\varphi_k(z_i)})\prod_{i=1}^ndm(z_i)
\\&=\frac{1}{n!}\int_{(rU)^c}\cdots \int_{(rU)^c}\sum_{\sigma,\tau \in S_n}{\mbox{sgn}(\sigma\tau)}\prod_{i=1}^{n}\varphi_{\sigma(i)}(z_i)\overline {\varphi_{\tau(i)}(z_i)}\prod_{i=1}^n dm(z_i)
\\&=\sum_{\sigma\in S_n}{\mbox{sgn}(\sigma)}\prod_{i=1}^{n}\int_{(rU)^c}\varphi_{i}(z)\overline {\varphi_{\sigma(i)}(z)}dm(z)
\\&=\det\left(\int_{(rU)^c}\varphi_{i}(z)\overline {\varphi_{j}(z)}dm(z)\right)_{1\le i,j\le n}.
\end{align*}
Let us define
 $$M_n(rU):=\left(\int_{(rU)^c}\varphi_{i}(z)\overline {\varphi_{j}(z)}dm(z)\right)_{1\le i,j\le n}.$$  $M_n(rU)$ is the integral of the positive definite matrix function $\left(\varphi_{i}(z)\overline {\varphi_{j}(z)}\right)_{1\le i,j\le n}$ over the region $(rU)^c$. So, we have  that 
$ M_n(rU)\ge M_n(r\D)\ge 0$ for all $n$ and $U\subseteq \D$. Observe that for a positive definite matrix $\begin{bmatrix}A & B \\ C & D\end{bmatrix}$, we have 
$$
\det\begin{bmatrix}A & B \\ C & D\end{bmatrix}\le \det(A)\det(D). 
$$
 Therefore we have
$$
\det (M_n(rU))\le \det (M_{n-1}(rU)).\int_{(rU)^c}\varphi_n(z)\overline{\varphi_n(z)}dm(z)\le \det (M_{n-1}(rU)).
$$
Therefore $\P[\X_n(rU)=0]=\det (M_n(rU))$ decreases with $n$ and converges to $\P[\X_{\infty}(rU)=0]$. Therefore,  for all $n\ge 2r^2$, we have
\begin{align}\label{eqn:up}
\P[\X_{2r^2}(rU)=0]\ge \P[\X_n(rU)=0]\ge \P[\X_{\infty}(rU)=0].
\end{align}
Again for $n> 2r^2$, we have 
\begin{align}\label{eqn:det}
\P[\X_n(rU)=0]= \det(M_n(rU))=\det(M_{2r^2}(rU))\det([M_n(rU)/M_{2r^2}(rU)]),
\end{align}
where $[M_n(rU)/M_{2r^2}(rU)]$ is  the Schur complement of the block $M_{2r^2}(rU)$ of the matrix $M_{n}(rU)$. Since $M_n(rU)\ge M_n(r\D)\ge 0$, the Schur complements satisfy the inequality
$$
[M_n(rU)/M_{2r^2}(rU)]\ge [M_n(r\D)/M_{2r^2}(r\D)].
$$
By min-max theorem, we have
$$
\det([M_n(rU)/M_{2r^2}(rU)])\ge \det([M_n(r\D)/M_{2r^2}(r\D)]).
$$
As $\D$ is circular domain,  we have 
$$
\int_{(r\D)^c}\varphi_i(z)\overline{\varphi_j(z)}dm(z)=0\;\;\; \mbox{for all $i\neq j$}.
$$
Therefore $M_n(r\D)=\mbox{diag}\left(\int_{(r\D)^c}|\varphi_1(z)|^2dm(z),\ldots,\int_{(r\D)^c}|\varphi_n(z)|^2dm(z)\right)$. For large $r$, we have
\begin{eqnarray}\label{eqn:tailed}
\det([M_n(rU)/M_{2r^2}(rU)])&\ge&\prod_{k=2r^2+1}^{n}\int_{(r\D)^c}|\varphi_k(z)|^2dm(z)\nonumber
\\&\ge& \prod_{k=2r^2+1}^{\infty}\int_{(r\D)^c}|\varphi_k(z)|^2dm(z)\ge C \;\;\; \mbox{(by \eqref{eqn:tail})},
\end{eqnarray}
for some positive constant $C$. By \eqref{eqn:det} and \eqref{eqn:tailed}, for large $r$, we get 
\begin{eqnarray}\label{eqn:low}
\P[\X_{\infty}(rU)=0]&=&\lim_{n\to \infty}\P[\X_n(rU)=0]\ge C.\P[\X_{2r^2}(rU)=0].
\end{eqnarray}
Therefore from \eqref{eqn:up} and \eqref{eqn:low} we get 
\begin{eqnarray*}
\lim_{r\to \infty}\frac{1}{r^4}\log\P[\X_{\infty}(rU)=0]&=&\lim_{r\to \infty}\frac{1}{r^4}\log\P[\X_{2r^2}(rU)=0]\\&=&\lim_{n\to \infty}\frac{4}{n}\log\P[\X_{n}(\sqrt{n}\cdot\frac{U}{\sqrt{2}})=0].
\end{eqnarray*}
Since $U$ satisfies the conditions of either  Theorem \ref{thm:holeprobability12} or Theorem \ref{thm:holeprobability1}, we have 
\begin{eqnarray*}
\lim_{r\to \infty}\frac{1}{r^4}\log\P[\X_{\infty}(rU)=0]&=&4.\left(-R_{\frac{U}{\sqrt 2}}+\frac{3}{4}\right)=-4.R_{\frac{U}{\sqrt 2}}'
\\&=&-R_{{U}}'=-R_{{U}}+\frac{3}{4},
\end{eqnarray*}
third equality follows from the Theorem \ref{thm:generalformula}.
\end{proof}

\section{proof of Theorem \ref{thm:generalformula}}\label{proof}

In this section we present the proof of Theorem \ref{thm:generalformula}. Before proceeding to the proof, we provide some basic definitions and facts of classical potential theory from \cite{ransfordbook}, \cite{totikbook}.

Support of a positive measure $\mu$ on $\mathbb{C}$, denoted by $supp(\mu)$, consists of all points $z$ such that $\mu(D_r(z))>0$ for every open disk $D_r(z)$ of radius $r>0$ and with center at $z$. The measure $\mu$ is said to be compactly supported measure if $supp(\mu)$ is compact. Let $\mu$ be a compactly supported probability measure on $\mathbb C$. Then its {\it potential} is the function $p_{\mu}:\mathbb C \to (-\infty, \infty]$ defined by
$$
p_{\mu}(z):=-\int \log |z-w|d\mu(w)\;\;\;\mbox{for all $z\in \mathbb C$}.
$$
Its {\it logarithmic energy}  $I_\mu$ is defined by
$$
I_\mu:=-\iint  \log|z-w|d\mu(z)d\mu(w)=\int p_{\mu}(z)d\mu(z).
$$

A set $E \subset \mathbb{C}$ is said to be polar if $I_{\mu}=\infty$ for all compactly supported probability measures $\mu$ with $supp({\mu}) \subset E$. The {\it capacity} of a subset $E$ of $\mathbb C$ is given by
$$
C(E):=e^{-\inf\{I_{\mu}:\mu \in \mathcal P(E)\}}.
$$
Clearly, the capacity of the polar sets is zero. The singleton sets are polar  and countable union of polar sets  is again a polar set. 
A property is said to  hold {\it quasi-everywhere} (q.e.) on $E\subset \mathbb C$ if it holds everywhere on $E$ except some borel polar set. Every Borel probability measure with finite logarithmic  energy assigns zero measure to Borel polar sets (see Theorem 3.2.3, \cite{ransfordbook}). So, a property, which holds q.e. on $E$, holds {\it $\mu$-everywhere} on $E$, for every $\mu$ with finite energy. As a corollary, we have that every Borel polar set has Lebesgue measure zero and   a property, which holds q.e. on $E$, holds a.e. on $E$.

 A weight function $w:E\to [0,\infty)$, on a closed subset $E$ of $\mathbb{C}$, is said to be {\it
admissible } if it satisfies the following three conditions:
\begin{enumerate}
\item $w$ is upper semi-continuous,

\item $E_0:=\{z\in E|w(z)>0\}$ has positive capacity,
\item if $E$ is unbounded, then $|z|w(z)\to 0$ has
$|z|\to\infty,z\in E$.
\end{enumerate}

\noindent{\bf Equilibrium measure: }The probability measure, with support in $E$,  which minimizes 
$$
R_{\mu}=\int p_{\mu}(z)d\mu(z)+2\int Q(z)d\mu(z),
$$ where $w=e^{-Q}$ is an admissible weight function, is called {\it equilibrium measure} for $E$ with external field $Q$. We have the following facts regarding equilibrium measure.

\begin{fact}\label{ft:characterization}
Let $w=e^{-Q}$ be an admissible weight function on closed set $E$. Then there exists a unique equilibrium measure $\nu$, for $E$ with external field $Q$. The equilibrium measure $\nu$  has compact support and $R_{\nu}$ is finite (so is $I_{\nu}$). $\nu$ satisfies the following conditions 
\begin{eqnarray}\label{eqn:insupport}
 p_{\nu}(z)+Q(z)=C
\end{eqnarray}
for q.e. $z\in supp(\nu)$ and
\begin{eqnarray}\label{eq:outsidesupport}
 p_{\nu}(z)+Q(z)\ge C
\end{eqnarray}
for q.e. $z\in E$ for some constant $C$. Also, the above conditions uniquely characterize the equilibrium measure, i.e. a probability measure with compact support in $E$ and finite energy, which satisfies the conditions \eqref{eqn:insupport} and \eqref{eq:outsidesupport} for some constant $C$, is the equilibrium measure for $E$ with external field $Q$. 
\end{fact}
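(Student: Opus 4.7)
The plan is to follow the classical Frostman program: establish existence and compact support of the minimizer by the direct method, then uniqueness from strict positive definiteness of the logarithmic energy, and finally the variational conditions \eqref{eqn:insupport}--\eqref{eq:outsidesupport} and the converse characterization via first-order perturbation.

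For existence, I would minimize $R_\mu = I_\mu + 2\int Q\,d\mu$ directly over $\mathcal P(E)$. Lower semicontinuity in the weak topology comes from writing $-\log|z-w|$ and $Q$ each as an increasing supremum of bounded continuous functions (truncation from below) and applying monotone convergence; $Q$ is lower semicontinuous because $w=e^{-Q}$ is upper semicontinuous by admissibility condition~(1). Admissibility condition~(3) forces tightness of any minimizing sequence when $E$ is unbounded, since $|z|w(z)\to 0$ gives $Q(z)\gtrsim \log|z|$ at infinity and hence prevents mass from escaping. Condition~(2) supplies a test measure $\mu_0\in\mathcal P(E_0)$ with $I_{\mu_0}<\infty$, so the infimum is finite and any weak limit $\nu$ of a minimizing sequence attains it with $R_\nu<\infty$. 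Compactness of $supp(\nu)$ will then drop out of \eqref{eq:outsidesupport} combined with the asymptotic $p_\nu(z)\sim -\log|z|$ at infinity.

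Uniqueness rests on the Cartan--Frostman fact that $I(\sigma)>0$ for every nonzero signed measure $\sigma$ of total mass zero and finite energy; if $\mu,\nu$ both minimize, strict convexity of $R$ along the segment joining them forces $I(\mu-\nu)\le 0$, hence $\mu=\nu$. For the variational inequalities I would consider the perturbation $\nu_t=(1-t)\nu+t\sigma$ with $\sigma\in\mathcal P(E)$ compactly supported and of finite energy: the inequality $\tfrac{d}{dt}R_{\nu_t}\bigr|_{t=0^+}\ge 0$ rearranges to $\int(p_\nu+Q)\,d\sigma\ge C$ with $C=\int(p_\nu+Q)\,d\nu$. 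Specialising $\sigma$ to a measure concentrated on a putative sublevel set $\{p_\nu+Q<C-\varepsilon\}\cap E$ shows that set must have zero capacity, yielding \eqref{eq:outsidesupport} q.e.\ on $E$. Equation \eqref{eqn:insupport} follows because the nonnegative integrand $p_\nu+Q-C$ has $\nu$-integral zero and $\nu$ charges no polar set, so the integrand vanishes q.e.\ on $supp(\nu)$. The converse characterization is proved by integrating \eqref{eqn:insupport} against any candidate $\mu$ and \eqref{eq:outsidesupport} against $\nu$, subtracting to obtain $I(\mu-\nu)\le 0$, and invoking strict positive definiteness one more time.

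The step I expect to be the main obstacle is the promotion from $\mu$-almost-everywhere to quasi-everywhere statements throughout the variational argument. This relies on the technical fact that every Borel polar set is a null set for every finite-energy probability measure, so that capacity-zero exceptional sets are simultaneously invisible to $\mu$, $\nu$, and any perturbing $\sigma$; without this bridge between the potential-theoretic and measure-theoretic notions of smallness, the Frostman conditions hold only in a weaker a.e.\ sense and the uniqueness characterization fails.
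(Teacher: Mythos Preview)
Your sketch is a correct outline of the classical Frostman--Saff--Totik argument, and there is no genuine gap in the strategy. However, the paper does not actually prove this fact at all: it is stated as a background ``Fact'' and immediately followed by the sentence ``For the proof of this fact, see (\cite{totikbook}, Chapter I Theorem 1.3 and Theorem 3.3).'' So there is no paper proof to compare against; the authors treat this as a black-box citation to Saff--Totik, \emph{Logarithmic Potentials with External Fields}.

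That said, your outline is precisely the approach taken in that reference, so in substance you are reproducing the cited proof rather than offering an alternative. One small remark on your converse paragraph: you phrase it as integrating \eqref{eqn:insupport} against $\mu$ and \eqref{eq:outsidesupport} against $\nu$, but to get $I(\mu-\nu)\le 0$ you in fact need both cross-integrations (the inequality for $\mu$ integrated against $\nu$ \emph{and} the inequality for $\nu$ integrated against $\mu$, together with the equalities on the respective supports), then add. With only one cross-integration the sign does not close up. This is a minor bookkeeping point and does not affect the validity of your plan.
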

For the proof of this fact, see (\cite{totikbook}, Chapter I Theorem 1.3 and Theorem 3.3).
The discrete analogue of the above minimization problem of $R_{\mu}$ is the problem of finding the limit of 
$$
\delta_n^{\omega}(E):=\sup_{z_1,z_2,\ldots,z_n\in E}\left\{
\prod_{i<j}|z_i-z_j|\omega(z_i)\omega(z_j)\right\}^{\frac{2}{n(n-1)}},
$$
as $n \to \infty$. The sets $\mathcal{F}_n=\{z_1^*,z_2^*,\ldots,z_n^* \} \subset E$ are said to be {\it $n$-th weighted Fekete sets} for $E$ 
if 
$$
\delta_n^{\omega}(E)=\left\{
\prod_{i<j}|z_i^*-z_j^*|\omega(z_i^*)\omega(z_j^*)\right\}^{\frac{2}{n(n-1)}}.
$$
The points $z_1^*,z_2^*,\ldots,z_n^*$ in weighted Fekete set $\mathcal{F}_n$ are called $n$-th  weighted Fekete points.  It is known that the sequence $\{\delta_n^{\omega}(E)\}_{n=2}^{\infty}$ decreases to $e^{-R_{\nu}}$, i.e. 
\begin{eqnarray}\label{eqn:limit}
\lim_{n \to \infty} \delta_n^{\omega}(E)= e^{-R_{\nu}}.
\end{eqnarray}
Moreover, the uniform probability measures on $n$-th weighted Fekete sets converge weakly to equilibrium measure $\nu$, i.e.
$$
\lim_{n \to \infty} {\nu}_{\mathcal{F}_n}= {\nu},
$$
where ${\nu}_{\mathcal{F}_n}$ is uniform measure on ${\mathcal{F}_n}$. For the proofs of these facts, see \cite{totikbook}, Chapter III Theorem 1.1 and Theorem 1.3. The following fact  (an application of Theorem 4.7 in Chapter II, \cite{totikbook},  to bounded open sets) is about  the existence and uniqueness of the balayage measure.  
\begin{fact}\label{ft:balayage}
Let $U$ be an bounded open subset of $\C$ and $\mu$ be finite Borel measure on $U$ (i.e., $\mu(U^c)=0$). Then there exists a unique measure $\hat \mu $ on $\partial U$ such that $\hat \mu(\partial U)=\mu(U)$, $\hat \mu(B)=0$ for every Borel polar set $B\subset \C$ and $p_{\hat \mu(z)}=p_{\mu}(z)$ for q.e. $z\in U^c$. $\hat \mu$ is said to be the balayage measure associated with $\mu$ on $U$.  
\end{fact}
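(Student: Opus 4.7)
The plan is to construct $\hat\mu$ explicitly by sweeping $\mu$ to the boundary via harmonic measure, verify the three listed properties directly, and then deduce uniqueness from the unicity theorem for logarithmic potentials. Concretely, for each $x\in U$ let $\omega_x$ denote the harmonic measure on $\partial U$ evaluated at $x$; this is a Borel probability measure on $\partial U$ whose Borel-measurable dependence on $x$ comes from the Perron--Wiener--Brelot representation of the Dirichlet solution. I would define
\[
\hat\mu(A)\;:=\;\int_U \omega_x(A)\,d\mu(x)
\]
for Borel $A\subset\partial U$ and extend by zero off $\partial U$.

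\textbf{Verification of the three properties.} Fubini and $\omega_x(\partial U)=1$ give $\hat\mu(\partial U)=\mu(U)$. The classical fact that Borel polar sets have harmonic measure zero from every interior point gives $\hat\mu(B)=0$ for every Borel polar $B\subset\C$. For the potential identity, first fix $z$ in the open exterior of $\overline U$: then $w\mapsto-\log|z-w|$ is harmonic on a neighbourhood of $\overline U$, so the harmonic-measure representation yields
\[
\int_{\partial U}-\log|z-w|\,d\omega_x(w)\;=\;-\log|z-x|\qquad(x\in U),
\]
and integrating against $\mu$ and applying Fubini gives $p_{\hat\mu}(z)=p_\mu(z)$. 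For a regular boundary point $z\in\partial U$ the identity follows by approaching $z$ from the exterior and using continuity of the Dirichlet solution up to regular boundary points (handling the logarithmic singularity carefully, since $\mu$ assigns zero mass to singletons). The remaining exceptional set is the set of irregular boundary points of $U$, which is polar by the Kellogg--Evans theorem, so the identity holds q.e.\ on $U^c$.

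\textbf{Uniqueness.} If $\hat\mu_1$ and $\hat\mu_2$ both satisfy the three properties, then $p_{\hat\mu_1}-p_{\hat\mu_2}=0$ q.e.\ on $U^c$; since each measure is supported on $\partial U\subset U^c$ and assigns zero to polar sets, the equality in fact holds $(\hat\mu_1+\hat\mu_2)$-almost everywhere. Both measures have finite logarithmic energy (for unbounded-energy $\mu$ one truncates first and passes to a limit), so the unicity theorem for logarithmic potentials---equal potentials quasi-everywhere imply equal measures of finite energy---forces $\hat\mu_1=\hat\mu_2$.

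The main obstacle is handling the potential identity at boundary points: the limit from the exterior together with the Kellogg--Evans theorem on the size of the irregular set is exactly what produces the ``q.e.'' qualifier in the conclusion, and one has to be careful with the logarithmic singularity when $z\in\partial U$. A secondary issue is Borel measurability of $x\mapsto\omega_x(A)$, resolved by Perron--Wiener--Brelot, and the finite-energy reduction needed to invoke unicity.
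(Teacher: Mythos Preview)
The paper does not actually prove this statement: it is presented as a \emph{Fact} and attributed to Saff--Totik (\cite{totikbook}, Chapter~II, Theorem~4.7), with no argument given. So there is no ``paper's own proof'' to compare against; your proposal supplies a proof where the paper only gives a citation.

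Your construction via harmonic measure is the standard one and is what underlies the cited theorem. The verification of total mass and of vanishing on polar sets is fine. Two points deserve more care than you give them. First, the passage from the open exterior to regular boundary points: your phrase ``handling the logarithmic singularity carefully'' hides the real work, since the boundary datum $w\mapsto-\log|z-w|$ is only lower semicontinuous at $z\in\partial U$ and one needs either an approximation argument or the identification of $\omega_x$ with the balayage of $\delta_x$ (whence $p_{\omega_x}(z)=p_{\delta_x}(z)$ for q.e.\ $z\in U^c$ is the statement for point masses, and one integrates in $x$). Second, the uniqueness step: the unicity theorem for logarithmic potentials, in the form you invoke, requires equality of potentials on an open set (or a.e.\ with respect to Lebesgue measure), whereas you only have equality q.e.\ on the closed set $U^c$. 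You need the additional observation that $p_{\hat\mu_1}-p_{\hat\mu_2}$ is harmonic on $U$ (both measures live on $\partial U$), vanishes on the open exterior, and vanishes q.e.\ on $\partial U$; a bounded-harmonic/removability argument then extends the vanishing to all of $U$, after which the standard unicity theorem applies. With those two gaps filled, your outline is correct and is essentially the textbook proof.
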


 As we are  are interested in the hole probabilities for Ginibre ensemble, we take for $E$  sets of the form $\mathbb{C}\backslash U$, for some open $U \subseteq \mathbb{D}$, and $Q(z)=\frac{|z|^2}{2}$. Clearly, $w(z)=e^{\frac{1}{2}|z|^2}$ is an admissible weight function on such $E$.

 First we give   basic examples of equilibrium measures,
then we prove Theorem \ref{thm:generalformula}.  Recall the notation, $R_U =\inf\{R_{\mu}:\mu \in \mathcal P(\mathbb C\backslash U)\}$, that we are using to emphasize the  hole probabilities. We use the following well known fact, known as Jensen's formula.
\begin{fact}\label{ft:fundamental}
For each $r>0$,
$$
\frac{1}{2\pi}\int_{0}^{2\pi}\log\frac{1}{|z-re^{i\theta}|}d\theta=\left\{\begin{array}{lr}\log
\frac{1}{r} & \mbox{ if } |z|\le r\\\log \frac{1}{|z|} & \mbox{ if
} |z|> r\end{array}\right..
$$
\end{fact}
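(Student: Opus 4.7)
The plan is to split into three cases based on the position of $z$ relative to the circle $|w|=r$, and to exploit the harmonicity of $\log|\cdot|$ together with the mean value property of harmonic functions.

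First I would handle the case $|z|>r$. The function $w\mapsto\log|z-w|$ is harmonic on an open neighbourhood of the closed disk $\{|w|\le r\}$, because its only singularity at $w=z$ lies strictly outside. The mean value property applied at the centre $w=0$ of the circle $|w|=r$ gives
$$
\frac{1}{2\pi}\int_0^{2\pi}\log|z-re^{i\theta}|\,d\theta=\log|z|,
$$
and taking negatives yields $\log(1/|z|)$, matching the claim.

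Next, for $|z|<r$ I would factor $|z-re^{i\theta}|=r\cdot|1-(z/r)e^{-i\theta}|$, splitting the integral as $\log r+\frac{1}{2\pi}\int_0^{2\pi}\log|1-(z/r)e^{-i\theta}|\,d\theta$. The map $w\mapsto\log|1-w|$ is harmonic on the open unit disk, and $\theta\mapsto (z/r)e^{-i\theta}$ parametrizes a circle of radius $|z|/r<1$ about the origin (the orientation is irrelevant for the value of the integral). The mean value property then collapses the remaining average to $\log|1-0|=0$, so the integrand totals $\log r$, and negating gives $\log(1/r)$.

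Finally, for the borderline case $|z|=r$ the integrand has an integrable logarithmic singularity at the unique angle where $re^{i\theta}=z$. The cleanest route is direct evaluation: after the factorization above one is reduced to showing $\int_0^{2\pi}\log|1-e^{i\theta}|\,d\theta=0$, which follows from the identity $|1-e^{i\theta}|=2|\sin(\theta/2)|$ together with the classical value $\int_0^{\pi}\log\sin\phi\,d\phi=-\pi\log 2$. Alternatively one can pass to the limit from inside or outside, bounding the integrand near the singular angle by $C\log|\theta-\theta_0|$ so that dominated convergence applies. The main obstacle is precisely this boundary case, since the pole in the integrand prevents a direct invocation of the mean value property; the interior and exterior cases are otherwise immediate from harmonicity.
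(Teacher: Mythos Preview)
Your proof is correct and follows essentially the same approach as the paper: both arguments rest on the harmonicity of $\log|1-w|$ on the unit disk and the mean value property, with the boundary case $|z|=r$ handled by direct computation (the paper merely asserts ``by direct calculation'' where you spell out the $\int_0^\pi\log\sin\phi\,d\phi=-\pi\log 2$ route). The only cosmetic difference is that the paper first reduces everything to the single identity $\frac{1}{2\pi}\int_0^{2\pi}\log|1-\rho e^{i\theta}|\,d\theta=0$ for $\rho\le 1$ and then scales, whereas you treat the cases $|z|>r$ and $|z|<r$ separately; the underlying mechanism is identical.
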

\noindent To see this, note that $\log |1-z|$ is harmonic on $\D$, by mean value property we have 
$$
\frac{1}{2\pi}\int_{0}^{2\pi}\log|1-re^{i\theta}|d\theta=0 \;\;\mbox{for $r<1$}.
$$
This equality is  true also for $r=1$, by direct calculation. This implies that $\frac{1}{2\pi}\int_{0}^{2\pi}\log|z-re^{i\theta}|d\theta=\log(\max\{|z|,r\})$.

\begin{fact}\label{thm:disk}
 Suppose $E=\mathbb C$ and $Q(z)=\frac{|z|^2}{2}$ on $\mathbb C$.
Then the equilibrium measure $\mu$ is normalized uniform measure on unit disk and $R_{\emptyset}=\frac{3}{4}$, i.e.,
$$
d\mu(z)=\left\{\begin{array}{lr} \frac{1}{\pi}dm(z)& \mbox{if  }
z\in \overline{\mathbb{D}}\\0& \mbox{otherwise}
\end{array}\right. \mbox{ and } R_{\mu}=\frac{3}{4},
$$
where $\mathbb D$ denotes the open unit disk.
\end{fact}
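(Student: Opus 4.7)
The plan is to verify directly, using the uniqueness characterization in Fact~\ref{ft:characterization}, that the candidate measure $d\mu(z) = \tfrac{1}{\pi}\mathbf{1}_{\overline{\mathbb D}}(z)\,dm(z)$ is the equilibrium measure for $E=\mathbb C$ with external field $Q(z)=|z|^2/2$, and then to compute $R_\mu$ explicitly. Since $\mu$ is compactly supported, is a probability measure (area of $\overline{\mathbb D}$ is $\pi$), and has finite logarithmic energy, it suffices to exhibit a constant $C$ so that $p_\mu(z)+Q(z)=C$ quasi-everywhere on $\overline{\mathbb D}=\mathrm{supp}(\mu)$ and $p_\mu(z)+Q(z)\ge C$ quasi-everywhere on $\mathbb C$.

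The key computation is the potential $p_\mu(z)$, done via polar coordinates $w=re^{i\theta}$:
\[
p_\mu(z) \;=\; -\frac{1}{\pi}\int_0^1\!\int_0^{2\pi}\log|z-re^{i\theta}|\,r\,d\theta\,dr.
\]
Applying Jensen's formula (Fact~\ref{ft:fundamental}) to the inner integral gives $\frac{1}{2\pi}\int_0^{2\pi}\log|z-re^{i\theta}|\,d\theta = \log\max(|z|,r)$, so $p_\mu(z) = -2\int_0^1 r\log\max(|z|,r)\,dr$. I would then split into two cases. For $|z|\le 1$, split the $r$-integral at $r=|z|$; the elementary antiderivative $\int r\log r\,dr = \tfrac{r^2}{2}\log r-\tfrac{r^2}{4}$ yields $p_\mu(z)=\tfrac{1}{2}-\tfrac{|z|^2}{2}$, so $p_\mu(z)+Q(z)=\tfrac{1}{2}$ identically on $\overline{\mathbb D}$. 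For $|z|>1$, the max equals $|z|$ throughout, giving $p_\mu(z)=-\log|z|$, hence $p_\mu(z)+Q(z)=\tfrac{|z|^2}{2}-\log|z|$.

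It remains to check that the function $f(r)=\tfrac{r^2}{2}-\log r$ satisfies $f(r)\ge\tfrac{1}{2}$ for $r\ge 1$; this follows since $f'(r)=(r^2-1)/r\ge 0$ on $[1,\infty)$ and $f(1)=\tfrac{1}{2}$. Together with the interior equality, this shows the characterization holds with $C=\tfrac{1}{2}$, so by Fact~\ref{ft:characterization}, $\mu$ is the equilibrium measure.

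Finally, to compute $R_\emptyset=R_\mu$, write $R_\mu=\int p_\mu\,d\mu+\int |z|^2\,d\mu=\int(p_\mu+Q)\,d\mu+\int Q\,d\mu$. The first integral equals $\tfrac{1}{2}$ by what we just proved. The second is $\tfrac{1}{2}\cdot\tfrac{1}{\pi}\int_{\overline{\mathbb D}}|z|^2\,dm(z)=\tfrac{1}{2}\cdot 2\int_0^1 r^3\,dr=\tfrac{1}{4}$. Hence $R_\emptyset=\tfrac{1}{2}+\tfrac{1}{4}=\tfrac{3}{4}$. There is no real obstacle here; the only care needed is in applying Jensen's formula cleanly and verifying the inequality case $|z|>1$, both of which are routine.
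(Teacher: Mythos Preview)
Your proposal is correct and follows essentially the same approach as the paper: compute $p_\mu$ via Jensen's formula (Fact~\ref{ft:fundamental}) in polar coordinates, verify the characterization conditions of Fact~\ref{ft:characterization} with $C=\tfrac12$, and then evaluate $R_\mu$ directly. The only cosmetic difference is that you organize the final computation as $R_\mu=\int(p_\mu+Q)\,d\mu+\int Q\,d\mu$ rather than computing $\int p_\mu\,d\mu$ and $\int |z|^2\,d\mu$ separately, which is equivalent.
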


 \begin{proof}[Proof of Fact \ref{thm:disk}]
 Let $d\mu(z)=\frac{1}{\pi}dm(z)$ when $z\in \overline{\mathbb D}$ and zero other wise. We show that the measure $\mu$ satisfies the conditions \eqref{eqn:insupport} and \eqref{eq:outsidesupport}.
 Hence by Fact \ref{ft:characterization} we conclude that $\mu$ is the equilibrium measure.

\noindent Let $z\in \overline{\mathbb D}$. Then we have
\begin{eqnarray}\label{eqn:equality}
p_{\mu}(z)&=&\frac{1}{\pi}\int_{\overline{\mathbb D}} \log\frac{1}{|z-w|}dm(w)\nonumber
\\&=&\frac{1}{\pi}\int_0^1\int_{0}^{2\pi} \log\frac{1}{|z-re^{i\theta}|}rdrd\theta\nonumber
\\&=&2\log\frac{1}{|z|}\int_0^{|z|}rdr+2\int_{|z|}^1\log\frac{1}{r}rdr\;\;\mbox{ (by Fact \ref{ft:fundamental})}\nonumber
\\&=&\frac{1}{2}-\frac{|z|^2}{2}.
\end{eqnarray}
Which implies that $\mu$ satisfies the condition
\eqref{eqn:insupport}. On other hand for $|z|\ge 1$ we have
\begin{eqnarray*}
p_{\mu}(z)&=&\log\frac{1}{|z|}.
\end{eqnarray*}
Since the function $f(r):=\frac{r^2}{2}+\log\frac{1}{r}$ is
strictly increasing for $r>1$ and $f(1)=\frac{1}{2}$. Hence $\nu$ satisfies
the condition \eqref{eq:outsidesupport}, i.e.,
\begin{eqnarray}\label{eqn:thm1}
p_{\mu}(z)+\frac{|z|^2}{2}\ge \frac{1}{2}
\end{eqnarray}
for $|z|\ge 1$. Therefore $\mu$ is the equilibrium measure for $\overline{\mathbb D}$ with external field $\frac{|z|^2}{2}.$

\noindent {\bf Value of $R_{\emptyset}$: } We have
\begin{eqnarray}\label{eqn:part1}
\int_{\overline{\mathbb D}} |z|^2d\mu(z)=\frac{1}{\pi}\int_{0}^1\int_{0}^{\pi}r^2rdr d\theta
=2\int_{0}^{1}r^3dr=\frac{1}{2}.
\end{eqnarray}

\noindent By Fact \ref{ft:fundamental}, we have 
\begin{eqnarray}\label{eqn:part2}
\int_{\overline{\mathbb D}}p_{\mu}(z) d\mu(z) 
=\frac{1}{4}.
\end{eqnarray}
Therefore by (\ref{eqn:part1}) and (\ref{eqn:part2}) we get  $R_{\emptyset}=\frac{3}{4}$.
\end{proof}

\begin{remark}\label{re:disk}
If $E=\C \backslash \D$ and $Q(z)=\frac{|z|^2}{2}$ on $\C$. Then by similar calculation it can be shown that the equilibrium measure $\nu$ is uniform measure on unit circle, i.e.
$$
d\nu(z)=\left\{\begin{array}{lr}\frac{1}{2\pi}d\theta &  \mbox{if }  |z|=1\\ 0 & \mbox{otherwise}\end{array}\right. \mbox{ and } R_{\mathbb{D}}=R_{\nu}=1.
$$

\end{remark}

Now, we proceed to prove Theorem \ref{thm:generalformula}.

\begin{proof}[Proof of Theorem \ref{thm:generalformula}]
Let $U\subseteq \D $ be an open set. Set $E=U^c$, $Q(z)=\frac{|z^2|}{2}$. Clearly, $w=e^{-Q}$ is an  admissible weight function on $E$. Let $\mu$
be the uniform probability measure on unit disk. Let $\mu=\mu_1+\mu_2$, where $\mu_1$ and $\mu_2$ are $\mu$ restricted to $U^c$ and $U$ respectively. By Fact \ref{ft:balayage}, we know that there exists a measure $\nu_2$ on $\partial U$ such that $\nu_2(\partial U)=\mu_2(U)$, $\nu_2(B)=0$ for every Borel polar set and 
$$
p_{\nu_2}(z)=p_{\mu_2}(z)\;\; \mbox{ q.e. on $U^c$}.
$$ 
Define $\nu=\mu_1+\nu_2$. Then  we have that the support of $\nu$ is $\overline \D\backslash U$ and 
$$
p_{\nu}(z)=p_{\mu_1}(z)+p_{\nu_2}(z)=p_{\mu_1}(z)+p_{\mu_2}(z)=p_{\mu}(z)\;\; \mbox{ q.e. on $U^c$}.
$$
 From  \eqref{eqn:equality} and \eqref{eqn:thm1}, we have 
\begin{eqnarray*}
p_{\mu}(z)+\frac{|z|^2}{2}&=&\frac{1}{2}\;\;\mbox{ for $z\in \overline \D$}
\\ p_{\mu}(z)+\frac{|z|^2}{2}&\ge&\frac{1}{2}\;\;\mbox{ for $z\in \overline \D^c$}.
\end{eqnarray*}
This gives us that 
\begin{eqnarray*}
p_{\nu}(z)+\frac{|z|^2}{2}&=&\frac{1}{2}\;\;\mbox{ for q.e. $z\in supp(\nu) $}
\\ p_{\nu}(z)+\frac{|z|^2}{2}&\ge&\frac{1}{2}\;\;\mbox{ for q.e. $z\in  E$}.
\end{eqnarray*}
The energy of the measure  $\nu$,
\begin{eqnarray*}
I_{\nu}&=&\int p_{\nu}(z)d\nu(z)
=\int \left(\frac{1}{2}-\frac{|z|^2}{2}\right)d\nu(z),
\end{eqnarray*}
is finite. The second equality follows from the fact that  $\nu(B)=0$ for all Borel polar sets $B$. So, $\nu $ has finite energy and satisfies conditions \eqref{eqn:insupport} and \eqref{eq:outsidesupport}. Therefore,  by Fact \ref{ft:characterization}, $\nu$ is the equilibrium measure for $U^c$ with the quadratic external field $\frac{|z|^2}{2}$.

\vspace{.5cm} \noindent{\bf Value of $R_U$ :} 
We have
$$
 p_{\nu}(z)=\frac{1}{2}-\frac{|z|^2}{2},
$$
q.e on the support of $\nu$. Therefore we have
\begin{eqnarray*}
R_{\nu}&=&\int p_{\nu}(z)d\nu(z)+\int |z|^2d\nu(z)
\\&=&\frac{1}{2}-\int \frac{|z|^2}{2}d\nu(z)+\int |z|^2d\nu(z)
\\&=&\frac{1}{2}+\int \frac{|z|^2}{2}d\mu_1(z)+\int \frac{|z|^2}{2}d\nu_2(z)
\\&=&\frac{3}{4}+\frac{1}{2}\left[\int |z|^2d\nu_2(z)-\int |z|^2d\mu_2(z)\right].
\end{eqnarray*}
Last equality follows from the fact that $\int |z|^2d\mu_1(z) + \int |z|^2d\mu_2(z)=\frac{1}{2} $.
 Therefore we get
\begin{eqnarray}\label{eqn:robinconstant}
R_{\nu}=\frac{3}{4}+\frac{1}{2}\left[\int_{\partial U} |z|^2d\nu_2(z)-\frac{1}{\pi}\int_{{U}} |z|^2dm(z)\right].
\end{eqnarray}
The result follows from the fact that $R_U=R_{\nu}$.
\end{proof}

\begin{remark}\label{relations}
Let $\nu_2$ and $\mu_2$ be as in the above proof i.e. $\nu_2$ is the balayage measure associated with $\mu_2$.  We have $p_{\nu_2}(z)=p_{\mu_2}(z)$ for q.e.  $z\in { U}^c$.  As the logarithmic potential of a measure is harmonic outside its support, the above relation holds for every $z \in {\overline U}^c$. Outside $\D$,  $p_{\nu_2}(z)$ and $p_{\mu_2}(z)$ are real parts of the analytic functions $ -\int_{\partial U}\log{(z-w)}d\nu_2(w)$ and $ -\int_{\partial U}\log{(z-w)}d\mu_2(w)$, respectively. 
So  there exists a constant $c$ such that for all $|z|>1$,
\begin{eqnarray}\label{eqn:moment}
\int_{\partial U}\log{(z-w)}d\nu_2(w)&=&\int_{ U}\log{(z-w)}d\mu_2(w) +c,\nonumber\\
 \Leftrightarrow \int_{\partial U}[\log{z}+\sum_{n=1}^{\infty}
\frac{w^n}{nz^n}]d\nu_2(w)&=&\int_{ U}[\log{z}+\sum_{n=1}^\infty
\frac{w^n}{nz^n}]d\mu_2(w)+c\nonumber\\
\Leftrightarrow \int_{\partial U}w^n d\nu_2(w)&=&\int_{ U}w^nd\mu_2(w), \forall
n\geqslant0 \mbox{  and $c=0$}\nonumber\\
\Leftrightarrow \int_{\partial U}w^n d\nu_2(w)&=&\frac{1}{\pi}\int_{ U}w^ndm(w), \forall
n\geqslant0. 
\end{eqnarray}
To see the converse of the above, suppose $\nu_2$ is a measure on $\partial U$ which satisfies  the relations \eqref{eqn:moment}, then $p_{\nu_2}(z)=p_{\mu_2}(z)$ for every   $z\in { \overline U}^c.$ If $\partial U$ is a piecewise smooth curve and $\nu_2$ has  density with respect to arc-length on $\partial U$, then $p_{\nu_2}(z)$ is continuous  at all the continuity points of the density of $\nu_2$ (\cite{totikbook} Chapter II Theorem 1.5). In this case $p_{\mu_2}(z)$ is also continuous on $\partial U$. So if the density of $\nu_2$ is piecewise continuous   on $\partial U$, we get that 
$p_{\nu_2}(z)=p_{\mu_2}(z)$ for q.e.   $z\in {  U}^c.$  Therefore when $\partial U$ is piecewise smooth curve,  a measure $\nu_2$ on $\partial U$ which has piecewise continuous density with respect to arclength and satisfies relations  \eqref{eqn:moment} is the balayage measure on $\partial U$.  
\end{remark}

\section{Proofs of Theorems \ref{thm:holeprobability12} and \ref{thm:holeprobability1}}\label{gap}
In this section, we give the proofs of Theorem
\ref{thm:holeprobability12} and Theorem
\ref{thm:holeprobability1}.  First, we give an  upper bound for hole probabilities.

\noindent{\bf Upper bound: } From \eqref{eqn:holeprob} we have
\begin{eqnarray}\label{eqn:feket}
\P[\X_n(\sqrt n U)=0]&=&\frac{1}{Z_n}\int_{U^c}\ldots \int_{U^c}
e^{-n\sum_{k=1}^n|z_k|^2}\prod_{i<j}|z_i-z_j|^2\prod_{k=1}^n
dm(z_k)\nonumber
\\&=&\frac{1}{Z_n}\int_{U^c}\ldots \int_{U^c}\left\{ \prod_{i<j}|z_i-z_j|\omega(z_i)\omega(z_j)\right\}^2\prod_{k=1}^ne^{-|z_k|^2} dm(z_k),
\end{eqnarray}
where $\omega(z)=e^{-\frac{|z|^2}{2}}$. Let
$z_1^*,z_2^*,\ldots,z_n^*$ be weighted Fekete points for $U^c$.
Therefore we have
$$
\delta_n^{\omega}(U^c)=\left\{
\prod_{i<j}|z_i^*-z_j^*|\omega(z_i^*)\omega(z_j^*)\right\}^{\frac{2}{n(n-1)}}.
$$
Therefore from \eqref{eqn:feket} we have
\begin{eqnarray*}
\P[\X_n(\sqrt n U)=0]&\le
&\frac{1}{Z_n}(\delta_n^{\omega}(U^c))^{n(n-1)}\prod_{k=1}^n\left(\int_{U^c}e^{-|z_k|^2}dm(z_k)\right)
\\&\le &\frac{1}{Z_n}.a^n.(\delta_n^{\omega}(U^c))^{n(n-1)} ,
\end{eqnarray*}
where $a=\int_{U^c}e^{-|z|^2}dm(z)$. Again we have 
\begin{eqnarray}\label{re:thecostant}
\lim_{n\to \infty}\frac{\log Z_n}{n^2}&=&\lim_{n\to \infty}\frac{1}{n^2}\left(-\frac{n^2}{2}\log n+n\log \pi +\sum_{k=1}^n\sum_{r=1}^k \log r \right)\nonumber
\\&=&\lim_{n\to \infty}\frac{1}{n^2}\left(\sum_{k=1}^n\sum_{r=1}^k \log \frac{r}{n} \right)\nonumber
\\&=&\int_{0}^1\int_{0}^y\log x dxdy=-\frac{3}{4}.
\end{eqnarray}
Therefore by \eqref{eqn:limit} and \eqref{re:thecostant}, we have
\begin{equation}\label{inequality}
\limsup_{n\to \infty}\frac{1}{n^2}\log \P[\X_n(\sqrt n U)=0]\le -\inf_{\mu\in \mathcal {P}(\C \backslash U)}R_{\mu}+\frac{3}{4} = -R_U+\frac{3}{4}.
\end{equation}
 Note that this upper bound is true for any open $U$.

 The following lemma, which is used in the proof of Theorem \ref{thm:holeprobability1}, provides separation between weighted Fekete points which are being considered in this paper. The separation of Fekete points has been studied by many authors, e.g., \cite{ortega}, \cite{bos}.

\begin{lemma}\label{lem:feketedistance}
Let $U$ be an open subset of $\mathbb D$ satisfying  condition
\eqref{eqn:condition} and $z_1^*,z_2^*,\ldots,z_n^*$ be weighted
Fekete points for $U^c$. Then, for large n,
\begin{eqnarray*}\label{eqn:feketedistance}
\min\{|z_i^*-z_k^*|: 1\le i\neq k \le n\}\ge C.\frac{1}{n^3}
\end{eqnarray*}
for some constant $C$ (does not depend on $n$).
\end{lemma}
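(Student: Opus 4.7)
My plan is a perturbation argument combining the Fekete optimality with the exterior ball condition \eqref{eqn:condition}. Let $d=\min_{i\ne j}|z_i^*-z_j^*|=|z_k^*-z_l^*|$ for some indices $k\ne l$.

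A preliminary step shows that $|z_j^*|\le R$ uniformly in $n$. The Gaussian weight $\omega(z)^{n-1}=e^{-(n-1)|z|^2/2}$ heavily penalizes large moduli, and since $U\subseteq\mathbb D$ we have $\C\setminus\overline{\mathbb D}\subset U^c$, which provides a landing zone: any Fekete point of excessive modulus could be moved to $\partial\mathbb D$, strictly increasing the weighted Vandermonde product and contradicting optimality.

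Next I perturb $z_l^*$. Condition \eqref{eqn:condition} yields a ball $B(\eta,\epsilon)\subset U^c$ with $|\eta-z_l^*|=\epsilon$, furnishing a half-plane of admissible directions $v$ for which $w=z_l^*+\Delta v\in U^c$ when $\Delta$ is small. I orient $v$ to push $w$ away from $z_k^*$, so that $|w-z_k^*|\ge d+c_0\Delta$ for a geometric constant $c_0>0$ depending only on $\epsilon$. Substituting $w$ into the Fekete optimality inequality
\begin{equation*}
\sum_{j\ne l}\log\frac{|z_l^*-z_j^*|}{|w-z_j^*|}\ge\frac{n-1}{2}\bigl(|z_l^*|^2-|w|^2\bigr),
\end{equation*}
isolating the $j=k$ term, and bounding the quadratic weight change by $O(n\Delta)$ via confinement, yields
\begin{equation*}
\log\bigl(1+c_0\Delta/d\bigr)\le C\Delta S+O(n\Delta),\qquad S:=\sum_{j\ne k,l}\frac{1}{|z_l^*-z_j^*|}.
\end{equation*}
A polynomial bound $S\le Cn^\alpha$ then delivers $d\ge c/n^\alpha$ upon balancing $\Delta$.

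The main obstacle is precisely the polynomial bound on $S$. The naive estimate $S\le n/d$ is circular and yields only exponential separation. The route I would take is a dyadic shell count around $z_l^*$, exploiting that the Fekete empirical measures converge to the equilibrium measure described in Theorem~\ref{thm:generalformula}, whose 2-dimensional bulk density is bounded by $1/\pi$; this suggests $N(r):=\#\{j:|z_l^*-z_j^*|\le r\}\le Cnr^2$ and hence $S=O(n)$. Making this density bound non-asymptotic requires either a weighted Bernstein-Markov inequality in the spirit of \cite{ortega,bos}, or an application of the upper bound in Lemma~\ref{thm:holeprobability} to small disks near $z_l^*$. The loose exponent $3$ in $d\ge C/n^3$ leaves comfortable room for the non-sharp estimates so obtained.
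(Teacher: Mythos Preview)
Your perturbation strategy is natural, but the gap you yourself flag is fatal as written: controlling $S=\sum_{j\ne k,l}|z_l^*-z_j^*|^{-1}$ polynomially is circular. The dyadic count $N(r)\le Cnr^2$ presupposes that Fekete points are already well spread at scale $r\sim n^{-1/2}$, which is a \emph{stronger} separation statement than the one you want; the asymptotic convergence to the equilibrium measure in Theorem~\ref{thm:generalformula} gives no finite-$n$ density bound without exactly such an input. The Bernstein--Markov route via \cite{ortega,bos} is not a free lunch either: those results rely on geometric hypotheses (interior cone/Markov conditions) on the support that are not obviously available for an arbitrary $U^c$ satisfying only \eqref{eqn:condition}, and in any case would import heavy machinery for what the paper does elementarily.

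The paper sidesteps $S$ entirely by working with the Fekete polynomial $P(z)=\prod_{j\ne 1}(z-z_j^*)$ rather than a one-point gradient. Since $P(z_2^*)=0$, the Cauchy integral formula on the circle $|\zeta-z_1^*|=2/n^2$ gives
\[
|P(z_1^*)|=|P(z_1^*)-P(z_2^*)|\le n^2\,|z_1^*-z_2^*|\cdot|P(\zeta^*)|,
\]
where $\zeta^*$ maximizes $|P|$ on that circle. If $\zeta^*\in U^c$, Fekete optimality in the form $|P(z_1^*)|\omega(z_1^*)^{n-1}\ge |P(\zeta^*)|\omega(\zeta^*)^{n-1}$ (plus a trivial weight comparison, since $|\zeta^*-z_1^*|=2/n^2$) closes the loop and yields $|z_1^*-z_2^*|\ge c/n^2$. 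If $\zeta^*\in U$, condition \eqref{eqn:condition} provides a disk $\overline{B(\eta,1/n)}\subset U^c$ containing $z_1^*$; a Taylor expansion of $P$ about $\eta$ together with Cauchy estimates on $\partial B(\eta,1/n)$ gives $|P(\zeta^*)|\le C n\cdot |P(\eta^*)|$ with $\eta^*\in U^c$, and Fekete optimality applies again. This extra factor $n$ is precisely the source of the exponent $3$. The point is that comparing \emph{suprema} of $|P|$ on small circles replaces your sum $S$ by a single maximum, which Fekete optimality controls directly---no equidistribution input needed.
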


\noindent Assuming Lemma \ref{lem:feketedistance} we proceed to
prove  Theorem \ref{thm:holeprobability1}.
\begin{proof}[Proof of Theorem \ref{thm:holeprobability1}]
 Let
$z_1^*,z_2^*,\ldots,z_n^*$ be weighted Fekete points for $U^c$. It
is known that $|z_{\ell}^*|\le 1$ for $\ell=1,2,\ldots, n$ (see
\cite{totikbook}, Chapter III Theorem 2.8). Therefore we have
\begin{eqnarray*}\label{eqn:lower}
\P[\X_n(\sqrt n U)=0]\nonumber &=&\frac{1}{Z_n}\int_{U^c}\ldots
\int_{U^c}
e^{-n\sum_{k=1}^n|z_k|^2}\prod_{i<j}|z_i-z_j|^2\prod_{k=1}^n
dm(z_k)\nonumber
\\&\ge &\frac{1}{Z_n}\int_{B_1}\ldots \int_{B_n}
\left\{
\prod_{i<j}|z_i-z_j|\omega(z_i)\omega(z_j)\right\}^2\prod_{k=1}^ne^{-|z_k|^2}
dm(z_k),\nonumber
\\&\ge &\frac{e^{-2n}}{Z_n}\int_{B_1}\ldots \int_{B_n}
\left\{ \prod_{i<j}|z_i-z_j|\omega(z_i)\omega(z_j)\right\}^2\prod_{k=1}^n dm(z_k),
\end{eqnarray*}
where $B_\ell=U^c\cap B(z_{\ell}^*,\frac{C}{n^4})$ for
$\ell=1,2,\ldots, n$ and $\omega(z)=e^{-\frac{|z|^2}{2}}$. By
Lemma \ref{lem:feketedistance} we have $|z_i^*-z_j^*|\ge
\frac{C}{n^3}$ for $i\neq j$, for some constant $C$ independent of
$n$. Suppose $z_i\in B(z_i^*,\frac{C}{n^4})$ and $z_j\in
B(z_j^*,\frac{C}{n^4})$ for $i\neq j$, then  we have
\begin{eqnarray*}
|z_i-z_j|\ge |z_i^*-z_j^*|-\frac{2C}{n^4}\ge
|z_i^*-z_j^*|-\frac{2}{n}\cdot |z_i^*-z_j^*| \ge
|z_i^*-z_j^*|\left(1-\frac{2}{n}\right).
\end{eqnarray*}
Therefore we have
\begin{eqnarray*}
&&\P[\X_n(\sqrt n U)=0] \\&\ge& \frac{e^{-2n}}{Z_n}\int_{B_1}\ldots
\int_{B_n}\left\{
\prod_{i<j}|z_i^*-z_j^*|\left(1-\frac{2}{n}\right)\omega(z_i)\omega(z_j)\right\}^2
\prod_{k=1}^n dm(z_k)
\\&\ge &  \frac{e^{-2n}}{Z_n}\left(1-\frac{2}{n}\right)^{{n(n-1)}}.e^{-\frac{2C}{n^2}}.\left\{
\prod_{i<j}|z_i^*-z_j^*|\omega(z_i^*)\omega(z_j^*)\right\}^2\prod_{k=1}^n\int_{B_k}
dm(z_k)
\end{eqnarray*}
The last inequality follows from the fact that
$$
e^{-\frac{1}{2}|z_i|^2}\ge
e^{-\frac{1}{2}(|z_i^*|+\frac{C}{n^4})^2}\ge
e^{-\frac{1}{2}|z_i^*|^2}.e^{-\frac{C}{n^4}(|z_i^*|+\frac{C}{2n^4})}\ge
e^{-\frac{1}{2}|z_i^*|^2}.e^{-\frac{2C}{n^4}},
$$
for $z_i\in B(z_i^*,\frac{C}{n^4}), i=1,2,\ldots,n$. For large $n$, we have
$\int_{B_i} dm(z_i)\ge \pi.(\frac{ C}{2n^4})^2,
i=1,2,\ldots,n$ (by condition \eqref{eqn:condition}). Hence we
have
$$
\P[\X_n(\sqrt n U)=0]\ge
\frac{e^{-2n}}{Z_n}\left(1-\frac{2}{n}\right)^{{n(n-1)}}.e^{-\frac{2C}{n^2}}.(\delta_n^{\omega}(U^c))^{n(n-1)}.\left(\pi.\left(\frac{
C}{2n^4}\right)^2\right)^n.
$$
Therefore by \eqref{eqn:limit} and \eqref{re:thecostant}, we
have
$$
\liminf_{n\to \infty}\frac{1}{n^2}\log\P[\X_n(\sqrt n U)=0]\ge
-\inf_{\mu\in \mathcal {P}(\C \backslash U)}R_{\mu}+\frac{3}{4} = -R_U+\frac{3}{4}.
$$
 The above inequality and \eqref{inequality} give the result.
\end{proof}

It remains to prove  Lemma \ref{lem:feketedistance}.

\begin{proof}[Proof of Lemma \ref{lem:feketedistance}]
Let $P(z)=(z-z_2^*)\cdots (z-z_n^*)$. Now we show that
\begin{eqnarray*}\label{eqn:feketedist}
\min\{|z_1^*-z_k^*|: 2\le k \le n\}\ge C.\frac{1}{n^3}
\end{eqnarray*}
for some constant $C$. Suppose $|z_1^*-z_2^*|\le \frac{1}{n^2}$.
By Cauchy integral formula we have
\begin{eqnarray*}
|P(z_1^*)|&=&|P(z_1^*)-P(z_2^*)|
\\&=&\left|\frac{1}{2\pi i}\int_{|\zeta-z_1^*|=\frac{2}{n^2}}\frac{P(\zeta)}{(\zeta-z_1^*)}d\zeta-
\frac{1}{2\pi
i}\int_{|\zeta-z_1^*|=\frac{2}{n^2}}\frac{P(\zeta)}{(\zeta-z_2^*)}d\zeta\right|
\\&\le&
\frac{1}{2\pi}\int_{|\zeta-z_1^*|=\frac{2}{n^2}}\frac{|P(\zeta)||z_1^*-z_2^*|}{|\zeta-z_1^*||\zeta-z_2^*|}|d\zeta|
\\&\le
&\frac{1}{2\pi}.|P(\zeta^*)|.\frac{n^2}{2}.n^2.|z_1^*-z_2^*|.2\pi.\frac{2}{n^2},\;\;\;\;\mbox{(as $|\zeta-z_2^*|\ge \frac{1}{n^2}$)}
\end{eqnarray*}
where $\zeta^*\in \{\zeta:|\zeta-z_1^*|=\frac{2}{n^2}\}$ such that
$P(\zeta^*)=\sup\{|P(\zeta)|:|z_1^*-\zeta|=\frac{2}{n^2}\}$. Therefore we have
\begin{eqnarray}\label{eqn:z1}
|P(z_1^*)|\le n^2.|z_1^*-z_2^*|.|P(\zeta^*)|.
\end{eqnarray}
Again we have that if $z,w\in \overline{\D}$ and $|z-w|\le \frac{2}{n}$, then
\begin{eqnarray}\label{eqn:exp}
e^{-(n-1)\frac{|z|^2}{2}}\le 10.e^{-(n-1)\frac{|w|^2}{2}}.
\end{eqnarray}
 Indeed, we have
\begin{eqnarray*}
e^{-\frac{(n-1)}{2}(|z|^2-|w|^2)}= e^{-\frac{(n-1)}{2}(|z|+|w|)(|z|-|w|)}\le e^{\frac{(n-1)}{2}.2.\frac{2}{n}}= e^{2}.
\end{eqnarray*}

\noindent{\bf Case I:} Suppose $\zeta^*\in U^c$. Since  $z_1^*,z_2^*,\ldots,z_n^*$ are the Fekete points for $U^c$ we have 
$$
|P(\zeta^*)|.e^{-(n-1)\frac{|\zeta^*|^2}{2}}\le |P(z_1^*)|.e^{-(n-1)\frac{|z_1^*|^2}{2}}.
$$
Then from \eqref{eqn:z1} and \eqref{eqn:exp} we get
\begin{eqnarray*}
|P(z_1^*)|e^{-(n-1)\frac{|z_1^*|^2}{2}}&\le & n^2.
|z_1^*-z_2^*|.|P(\zeta^*)|.10.e^{-(n-1)\frac{|\zeta^*|^2}{2}}
\\ &\le &10.n^2. |z_1^*-z_2^*|.|P(z_1^*)|.e^{-(n-1)\frac{|z_1^*|^2}{2}}.
\end{eqnarray*}
 And hence we get
\begin{eqnarray*}
|z_1^*-z_2^*|\ge \frac{1}{10.n^2}.
\end{eqnarray*}

\noindent{\bf Case II:} Suppose $\zeta^*\in U$. Therefore dist$(z_1^*,\partial U)=\inf \{|z-z_1^*|\; :\;z\in \partial U\}<\frac{2}{n^2}$. Choose large $n$ such that $\frac{1}{n}<\epsilon $. From the given
condition \eqref{eqn:condition} on $U$, we can choose $\eta\in U^c$ such that  $z_1^*\in\overline{ B(\eta,\frac{1}{n})} \subseteq U^c $. By taking the power series expansion of $P$ around $\eta$ and by triangle inequality, we
get
\begin{eqnarray}\label{eqn:zeta1}
\;\;\;\;|P(\zeta^*)|\le
|P(\eta)|+|\zeta^*-\eta|.\left|\frac{P^{(1)}(\eta)}{1!}\right|+\cdots+|\zeta^*-\eta|^{(n-1)}.\left|\frac{P^{(n-1)}(\eta)}{(n-1)!}\right|,
\end{eqnarray}
where $P^{(r)}(\cdot)$ denotes the $r$-th derivative of $P$. From
the Cauchy integral formula we have
$$
\left|\frac{P^{(r)}(\eta)}{r!}\right|\le
\frac{1}{2\pi}\int_{|z-\eta|=\frac{1}{n}}\frac{|P(z)|}{|z-\eta|^{r+1}}|dz|\le
|P(\eta^*)|.n^r,
$$
where $\eta^*\in \{z:|z-\eta|=\frac{1}{n}\}$ such that
$P(\eta^*)=\sup\{|P(z)|:|z-\eta|=\frac{1}{n}\}$. Note that
$|\zeta^*-\eta|\le |\zeta^*-z_1^*|+|z_1^*-\eta|\le
\frac{2}{n^2}+\frac{1}{n}$, therefore we have
$$
|\zeta^*-\eta|^{r}.\frac{|P^{(r)}(\eta)|}{r!}\le \left(1+\frac{2}{ n}\right)^r.|P(\eta^*)|\le e^{2}.|P(\eta^*)|.
$$
Therefore from \eqref{eqn:zeta1} we get
$$
|P(\zeta^*)|\le |P(\eta)|+e^{2}.n.|P(\eta^*)|.
$$
And hence from \eqref{eqn:z1} and \eqref{eqn:exp} we have
\begin{eqnarray*}
&&|P(z_1^*)|e^{-(n-1)\frac{|z_1^*|^2}{2}}
\\&\le& n^2.|z_1^*-z_2^*|.10.\left(|P(\eta)|e^{-(n-1)\frac{|\eta|^2}{2}}+n.e^{2}.|P(\eta^*)|e^{-(n-1)\frac{|\eta^*|^2}{2}}\right)
\\&\le& n^2.|z_1^*-z_2^*|.10.\left(1+n.e^{2}\right)|P(z_1^*)|e^{-(n-1)\frac{|z_1^*|^2}{2}},
\end{eqnarray*}
since $z_1^*,z_2^*,\ldots,z_n^*$ are the Fekete points for $U^c$
and $\eta,\eta^*\in U^c$. Therefore we get
\begin{eqnarray*}
|z_1^*-z_2^*|\ge \frac{1}{2.10.e^{2}n^3}.
\end{eqnarray*}
By Case I and Case II we get that if $|z_1^*-z_2^*|\le
\frac{1}{n^2}$, then $|z_1^*-z_2^*|\ge \frac{20.e^{2}}{n^3}$. Similarly,
if $|z_1^*-z_k^*|\le \frac{1}{n^2}$ for $k=2,3,\ldots,n$, then
$|z_1^*-z_k^*|\ge \frac{20.e^{2}}{n^3}$. Therefore we have
$$
\min\{|z_1^*-z_k^*|: k=2,3,\ldots,n\}\ge \frac{20.e^{2}}{n^3}.
$$
Similarly it can be shown that $|z_{\ell}^*-z_k^*|\ge
\frac{20.e^{2}}{n^3}$ for all $1\le \ell\neq k \le n$ and hence
$$
\min\{|z_{\ell}^*-z_k^*|:1\le  \ell\neq k \le n \}\ge
\frac{20.e^{2}}{n^3}.
$$
Hence the result.
\end{proof}

Now we proceed to prove Lemma \ref{thm:holeprobability}, which helps in the proof of Theorem \ref{thm:holeprobability12}.

\begin{proof}[Proof of Lemma \ref{thm:holeprobability}]

As we already have the  upper bound from \eqref{inequality}, it only remains to prove the lower bound. From \eqref{eqn:holeprob}, we have
\begin{eqnarray}\label{eqn:lower}
\P[\X_n(\sqrt n U)=0]&=&\frac{1}{Z_n}\int_{U^c}\ldots \int_{U^c}
e^{-n\sum_{k=1}^n|z_k|^2}\prod_{i<j}|z_i-z_j|^2\prod_{k=1}^n
dm(z_k)\nonumber
\\&\ge &\frac{1}{Z_n}\int_{U^c}\ldots \int_{U^c}
e^{-n\sum_{k=1}^n|z_k|^2}\prod_{i<j}|z_i-z_j|^2\prod_{k=1}^n\frac{f(z_k)}{M}
dm(z_k),\nonumber
\end{eqnarray}
where $f$ is a compactly supported  probability density function with support in  $U^c$ and bounded by $M$. Applying logarithm on both sides we have
\begin{eqnarray*}
&&\log \P[\X_n(\sqrt n U)=0]
\\&\ge& -\log (Z_n .M^n) + \log \left(\int_{U^c}\ldots \int_{U^c}
e^{-n\sum_{k=1}^n|z_k|^2}\prod_{i<j}|z_i-z_j|^2\prod_{k=1}^n f(z_k)
dm(z_k)\right)
\\&\ge& -\log (Z_n .M^n) +  \int_{U^c}\ldots \int_{U^c}\log\left(
e^{-n\sum_{k=1}^n|z_k|^2}\prod_{i<j}|z_i-z_j|^2\right)\prod_{k=1}^n f(z_k)
dm(z_k)
\\&=& -\log (Z_n .M^n)+n(n-1)\int_{U^c} \int_{U^c} (\log |z_1-z_2|-\frac{n}{n-1}|z_1|^2)\prod_{k=1}^2 f(z_k)dm(z_k),
\end{eqnarray*}
where the second inequality follows from Jensen's inequality.
Therefore by taking limits on both sides, we have
\begin{eqnarray}\label{eqn:bddsupport}
\liminf_{n\to \infty}\frac{1}{n^2}\log \P[\X_n(\sqrt n U)=0]&\ge&
-\lim_{n\to \infty}\frac{1}{n^2}\log Z_n - R_{\mu}= - R_{\mu}+\frac{3}{4}
\end{eqnarray}
for any  probability measure $\mu$ with density bounded and
compactly supported on $U^c$.


Let $\mu$ be probability measure with density $f$ compactly
supported on $U^c$.  Consider the sequence of measures with
bounded densities $$d{\mu}_M(z)= \frac{f_M(z) dm(z)}{\int
f_M(w)dm(w) },$$ where $f_M(z)=\min\{f(z),M\}$. From monotone convergence theorem for positive and negative parts of logarithm, it follows that
$$\lim_{M \to \infty} \int_{U^c} \int_{U^c} \log|z_1-z_2|\prod_{k=1}^2 f_M(z_k)dm(z_k) = \int_{U^c} \int_{U^c} \log|z_1-z_2|\prod_{k=1}^2 f(z_k)dm(z_k).$$ From monotone convergence theorem, it follows that $\lim_{M \to \infty} \int
f_M(w)dm(w)=1$ and $\lim_{M \to \infty} \int
|z|^2f_M(z)dm(w)=\int
|z|^2f(z)dm(w)$. Therefore 
$$\lim_{M \to \infty} R_{\mu_{M}} = R_{\mu}.$$ So \eqref{eqn:bddsupport} is true for any measure with density compactly supported on $U^c$.

Let $\mu$ be a probability measure with compact support at a
distance of at least $\delta$ from $U$. Then the convolution
$\mu*\sigma_{\epsilon} $,  where $\sigma_{\epsilon}$ is  uniform
probability measure  on  disk of radius $\epsilon$ around origin,
has density compactly supported in $U^c$, if $\epsilon$ is less
than $\delta$. We have
\begin{eqnarray*}
I_{\mu*\sigma_{\epsilon}}&=&\iint \log|z-w|d(\mu*\sigma_{\epsilon})(z)d(\mu*\sigma_{\epsilon})(w)
\\&=&\iint\iint\iint \log|z+\epsilon r_1 e^{i \theta_1}-w-\epsilon r_2 e^{i \theta_2}|\frac{r_1dr_1d\theta_1}{\pi}\frac{r_2dr_2d\theta_2}{\pi}d\mu(z)d\mu(w)
\\&&\mbox{(limits of $r_1,r_2$ are from $0$ to 1 and $\theta_1,\theta_2$ are from $0$ to $2\pi$ )}
\\&\ge &\iint \log |z-w| d\mu(z)d\mu(w),
\end{eqnarray*}
where the inequality follows from the repeated application of the mean value property of the subharmonic function $\log |z|$. And also we have
\begin{eqnarray*}
I_{\mu*\sigma_{\epsilon}}\le \iint \log [|z-w| +2\epsilon]d\mu(z)d\mu(w).
\end{eqnarray*}
Therefore, $\lim_{\epsilon\to 0}I_{\mu*\sigma_{\epsilon}}=I_{\mu} $ and hence $\lim_{\epsilon\to 0}R_{\mu*\sigma_{\epsilon}}=R_{\mu}.$

So, \eqref{eqn:bddsupport} is true for any probability measure with compact support
whose distance from $U$ is positive. Hence the required lower bound.
\end{proof}

It remains to prove the Theorem \ref{thm:holeprobability12}.
\begin{proof}[Proof of Theorem \ref{thm:holeprobability12}]
Let $U,U_1,U_2,\ldots $ be open subsets of $\D$ satisfying conditions
in Theorem \ref{thm:holeprobability12}. By Lemma
\ref{thm:holeprobability}, we have
\begin{eqnarray*}
\limsup_{n\to \infty}\frac{1}{n^2}\log\P[\X_n(\sqrt n U)=0]&\le
&-\inf_{\mu\in {\mu\in
\mathcal P (U^c)}}R_{\mu}+\frac{3}{4},
\\ \liminf_{n\to
\infty}\frac{1}{n^2}\log\P[\X_n(\sqrt n U)=0]&\ge &-\inf_{\mu\in
\mathcal A}R_{\mu}+\frac{3}{4} \ge -\inf_{\mu\in \mathcal
A_m}R_{\mu}+\frac{3}{4},
\end{eqnarray*}
where
$\mathcal A=\{\mu\in \mathcal P(\mathbb C): \mbox {dist(Supp($\mu$),$\overline
U$)$>0$\}},$ $\mathcal A_m=\{\mu\in \mathcal P(\mathbb C):\mu(U_m)=0\}$. Since $\overline{U} \subset U_m$, we have
  $\mathcal A_m\subset \mathcal A $. By Theorem
\ref{thm:generalformula}, we also have that
$$
R_{U_m}=\inf_{\mu\in \mathcal A_m}
R_{\mu}=\frac{3}{4}+\frac{1}{2}\left[\int_{\partial
U_m}|z|^2d\nu_m(z)-\frac{1}{\pi}\int_{{U_m}}|z|^2dm(z)\right].
$$
Since the  balayage measures $\nu_m$ converge weakly to the balayage
measure $\nu$, $\inf_{\mu\in \mathcal A_m}R_{\mu}$ converges to
$\inf_{\mu\in \mathcal A}R_{\mu}$ as $m\to\infty$. Therefore we
have
\begin{eqnarray*}
-R_{U}+\frac{3}{4}&\ge&\limsup_{n\to
\infty}\frac{1}{n^2}\log\P[\X_n(\sqrt n U)=0]
\\&\ge& \liminf_{n\to
\infty}\frac{1}{n^2}\log\P[\X_n(\sqrt n U)=0]
\ge-R_U+\frac{3}{4}.
\end{eqnarray*}
Hence the result.
\end{proof}

\begin{remark}
Convex open sets in unit disk, which do not intersect unit circle, satisfy the conditions of Theorem
\ref{thm:holeprobability12}. More generally  note that if $U$ is
an open set containing origin such that $\overline U \subset aU$ for
all $a>1$, then the balayage measure $\nu_a$ on $\partial (aU)$ is
given in terms of the balayage measure $\nu$ on $\partial U$ as
 $\nu_a (B)=\frac{1}{a^2}\nu(\frac{1}{a}B)$ for any measurable set $B\subset \C$. Therefore $\nu_a$
 converges weakly to $\nu$ as $a\to 1$. Similarly, translations of such open sets $U$ also
 satisfy the conditions  of Theorem
\ref{thm:holeprobability12}. All the examples we
 considered, except annulus, satisfy the above condition. In the
 case of annulus, we can construct the required sequence of open
 sets by varying inner and outer radii of annulus appropriately.
\end{remark}

In the  next section we  calculate potential,
 the balayage measure $\nu_2$ and the constant $R_U$ explicitly for some particular open sets $U$.

\section{Examples}\label{example}
In this section we calculate the balayage measure $\nu_2$ and the
constant $R_U$ explicitly for some particular open sets $U$. In the first example we consider annulus with inner and outer radius $a$ and $b$ respectively.

\begin{example}\label{thm:annulus}
Fix $0<a<b<1$. Suppose $U=\{z\in \mathbb C: a<|z|<b\}$.
Then the balayage measure is $\nu_2=\nu_2'+\nu_2''$, where
\begin{eqnarray*}
 d\nu_2'(z)&=&\left\{\begin{array}{lr}
\lambda(b^2-a^2)\frac{d\theta}{2\pi}& \mbox{ if  } |z|=a\\0&
\mbox{o.w.}
\end{array}\right.
\\d\nu_2''(z)&=&\left\{\begin{array}{lr}
(1-\lambda)(b^2-a^2)\frac{d\theta}{2\pi}& \mbox{ if  } |z|=b\\0&
\mbox{o.w.}
\end{array}\right.
\end{eqnarray*}
and  $\lambda $ is given by
$$
\lambda = \frac{(b^2-a^2)-2a^2\log(b/a)}{2(b^2-a^2)\log(b/a)}.
$$
The constant is
$$
R_{U}=\frac{3}{4}+\frac{1}{4}(b^4-a^4)-\frac{1}{4}\frac{(b^2-a^2)^2}{\log(b/a)}.
$$
\end{example}

\noindent {\bf Note: }In particular if $a=b$, then
$R_{U}=\frac{3}{4}$ which implies Fact \ref{thm:disk}. Again if
we take aspect ratio $a/b=c$, then
$$
R_{U}=\frac{3}{4}+\frac{1}{4}\left((1-c^4)+\frac{(1-c^2)^2}{\log
c}\right)b^4.
$$
Note that the same  expression has appeared  in hole probability for infinite Ginibre ensemble (Theorem \ref{thm:annulus1}).

\begin{proof}[Computation for Example \ref{thm:annulus}] Because of rotational symmetry, the balayage measure on $\partial U$ has to be of the form 
$\nu_2'+\nu_2''$, where   
\begin{eqnarray*}
 d\nu_2'(z)&=&\left\{\begin{array}{lr}
\lambda(b^2-a^2)\frac{d\theta}{2\pi}& \mbox{ if  } |z|=a\\0&
\mbox{o.w.}
\end{array}\right.
\\d\nu_2''(z)&=&\left\{\begin{array}{lr}
(1-\lambda)(b^2-a^2)\frac{d\theta}{2\pi}& \mbox{ if  } |z|=b\\0&
\mbox{o.w.}
\end{array}\right.
\end{eqnarray*} 
let $\mu_2$ be the measure $\frac{1}{\pi}m$ on $U$. Note that if $|z|>b$, then $p_{\nu_2}(z)=p_{\mu_2}(z)$ for every choice of $0<\lambda<1$.
If $|z|<a$, by Fact \ref{ft:fundamental}, we have 
\begin{eqnarray*}
p_{\nu_2}(z)&=&\lambda(b^2-a^2)\log \frac{1}{a}+(1-\lambda)(b^2-a^2)\log \frac{1}{b},
\\p_{\mu_2}(z)&=&b^2\log\frac{1}{b}-a^2\log\frac{1}{a}+\frac{1}{2}(b^2-a^2).
\end{eqnarray*}
 By equating $p_{\nu_2}(z)=p_{\mu_2}(z)$ when  $|z|<a$, we get 
$$
\lambda=\frac{(b^2-a^2)-2a^2\log(b/a)}{2(b^2-a^2)\log(b/a)}.
$$
 Therefore for this particular choice of $\lambda$ we have $p_{\nu_2}(z)=p_{\mu_2}(z)$ for all $z \in \{z: a\le |z|\le b\}^c$. Therefore this particular value of $\lambda$ gives us the equilibrium measure and the constant
$$
R_{U}=\frac{3}{4}+\frac{1}{4}(b^4-a^4)-\frac{1}{4}\frac{(b^2-a^2)^2}{\log(b/a)},
$$
can be verified from the formula given in the Theorem \ref{thm:generalformula}.
\end{proof}

In the next example we consider disk of radius $a$ contained in unit disk. Let $B(c_0,a)$ be the ball of radius $a$ centered at $c_0$.

 \begin{example}\label{ex=cB}
For $U=B(c_0,a)\subseteq \D$,
 the equilibrium measure is $\nu=\nu_1+\nu_2$, where
 $$
d\nu_1(z)=\left\{\begin{array}{lr}
\frac{1}{\pi}dm(z)& \mbox{  if  } z\in\overline{\mathbb D}\backslash U\\0& \mbox{o.w.}
\end{array}\right.
\hspace{.5cm} \mbox{ and } \;\;
d\nu_2(z)=\left\{\begin{array}{lr}
\frac{a^2}{2\pi}d\theta& \mbox{  if  } |z-c_0|=a\\0& \mbox{o.w.}
\end{array}\right.
$$
and the constant is $R_U=\frac{3}{4}+\frac{1}{4}a^4$.

\end{example}

Note that the equilibrium measure and the constant do not depend
on the position of the ball. These values depend only on radius of
the ball. This follows directly from the fact that the balayage measure corresponding to uniform measure on a ball is uniform on its boundary, which follows easily from Fact \ref{ft:fundamental}. 
 Now we consider  ellipse.

\begin{example}\label{ex:ellipse}
Fix $0<a,b<1$. Suppose
$U=\{(x,y) | \frac{x^2}{a^2}+\frac{y^2}{b^2}< 1\}$. Then
$$
d\nu_2(z)=\frac{ab}{2\pi}\left[1-{\frac{a^2-b^2}{a^2+b^2}}\cos(2\theta)\right]d\theta\;\;\mbox{and
}\;\; R_U=\frac{3}{4}+\frac{1}{2}\cdot\frac{(ab)^3}{a^2+b^2}.
$$
\end{example}

\begin{proof}[Computation for Example \ref{ex:ellipse}]
Let $x=ar\cos{\theta},  y=br\sin{\theta}, 0 \leq \theta \leq 2\pi, 0
\leq r \leq 1$. Then we have
\begin{eqnarray*}
\frac{1}{\pi}\int_{U}w^ndm(w)&=&\frac{1}{\pi}\int_{U}(x+iy)^n dx dy\\
  &=&\frac{1}{\pi} \int_{0}^{1}\int_{0}^{2\pi}r^n(a\cos{\theta}+ib\sin{\theta})^n abr d{\theta}dr
\\&=&\frac{1}{\pi}\int_{0}^{2\pi}\frac{ab(a\cos{\theta}+ib\sin{\theta})^n}{n+2}d{\theta}
\\&&(\mbox{by substituting $\alpha=\frac{a+b}{2}, \beta=\frac{a-b}{2}$})
\\&=&\frac{1}{\pi}\int_{0}^{2\pi}\frac{({\alpha}^2-{\beta}^2)(\alpha e^{i\theta}+\beta e^{-i\theta})^n}{n+2}d{\theta} \\
\\&=& \left\{\begin{array}{lr}
\frac{1}{\pi}\cdot\frac{({\alpha}^2-{\beta}^2){\alpha}^{n/2}{\beta}^{n/2}{n \choose
n/2}}{n+2} & \mbox{if $n$ is even}\vspace{.3cm}
\\0 & \mbox{if $n$ is odd.}
\end{array}\right.
\end{eqnarray*}

Let $d\nu_2(w)=\frac{1}{\pi}[c_0+c_1(e^{2i\theta}+e^{-2i\theta})]d\theta$, then we have
\begin{eqnarray*}
\int_{\partial U}w^nd\nu_2(w)&=&\frac{1}{\pi}\int_{0}^{2\pi}{(\alpha e^{i\theta}+\beta e^{-i\theta})^n}[c_0+c_1(e^{2i\theta}+e^{-2i\theta})]d{\theta}\\
&=& \left\{\begin{array}{lr}\frac{1}{\pi}\left[ c_0{\alpha}^{n/2}{\beta}^{n/2}{n \choose
n/2}+c_1{n \choose
n/2-1}{\alpha}^{n/2}{\beta}^{n/2}(\frac{\alpha^2+\beta^2}{\alpha\beta})\right]
& \mbox{if $n$ is even}\vspace{.3cm}
\\0 & \mbox{if $n$ is odd.}
\end{array}\right.\\
\end{eqnarray*}
Note that if we take
$$({\alpha}^2-{\beta}^2)\frac{{n\choose n/2}}{n+2}= c_0{n \choose
n/2}+c_1{n \choose
n/2-1}\left(\frac{\alpha^2+\beta^2}{\alpha\beta}\right),\; \mbox{ for all $n$
even}
$$
which implies that
$$
\frac{({\alpha}^2-{\beta}^2)}{2}=c_0=-c_1\left(\frac{\alpha^2+\beta^2}{\alpha\beta}\right).
$$
Therefore $\nu_2$ satisfies \eqref{eqn:moment} for all $n$ and also has continuous density with respect to arclength of $\partial U$. Therefore, by Remark \ref{relations},  the   measure $\nu_2$ on $\partial U$  given by
$$
d\nu_2(z)=\frac{ab}{2\pi}\left[1-{\frac{a^2-b^2}{a^2+b^2}}\cos(2\theta)\right]d\theta
$$
is the balayage measure on $\partial U$ and constant $R_U$ is given by
$$
R_U=\frac{3}{4}+\frac{1}{2}\left[\int_{\partial
U}|w|^2d\nu_2(w)-\frac{1}{\pi}\int_{ U}|w|^2dm(w)\right]=\frac{3}{4}+\frac{1}{2}\cdot\frac{(ab)^3}{a^2+b^2}.
$$
Hence the result.
\end{proof}

Note that if we take $a=b$ then we get the Example \ref{ex=cB}. In
the next example we consider cardioid.

\begin{example}\label{ex:cardiod}
Fix $a,b>0$ such that 
$U=\{re^{i\theta}|0\leq r< b(1+2a\cos\theta), 0\le \theta\le 2\pi\} \subseteq \D$. Then the balayage measure $\nu_2$ and the constant $R_U$ are given by
$$
d\nu_2(w)=\frac{b^2}{\pi}(1+a^2+2a\cos \theta)d\theta\;\mbox{ and } R_U=\frac{3}{4}+\frac{b^4}{2}\left((a^2+1)^2-\frac{1}{2}\right).
$$
\end{example}

\noindent {\bf Note: }The cardioid $U$ can be thought of as small
perturbation of disk of radius $b$.

\begin{proof}[Computation for  Example \ref{ex:cardiod}]
We have
\begin{eqnarray*}
\frac{1}{\pi}\int_{U}w^ndm(w)&=&\frac{1}{\pi}\int_{0}^{2\pi}\int_{0}^{b(1+2a\cos{\theta})}r^ne^{in\theta}rdrd\theta\\
&=&\frac{1}{\pi}\int_{0}^{2\pi}\frac{b^{n+2}(1+2a\cos{\theta})^{n+2}}{n+2}e^{in\theta}d\theta\\
&=&\frac{b^{n+2}}{\pi}\int_{0}^{2\pi}\frac{(1+ae^{i\theta}+ae^{-i\theta})^{n+2}}{n+2}e^{in\theta}d\theta\\
&=&\frac{b^{n+2}}{\pi(n+2)}\int_{0}^{2\pi}\sum_{0\leq{u+v}\leq{n+2}}\frac{(n+2)!}{u!v!(n+2-u-v)!}a^{u+v}e^{i(n+u-v)\theta}d\theta\\
&=&\frac{b^{n+2}}{\pi(n+2)}\left(\frac{(n+2)!}{0!n!2!}a^{n}+\frac{(n+2)!}{1!(n+1)!0!}a^{n+2}\right).2{\pi}\\
&=&{b^{n+2}}\cdot a^n(n+1+2a^2).
\end{eqnarray*}
We show that the measure
$d\mu(w)=\frac{b^2}{2\pi}(1+2a^2+ae^{i\theta}+ae^{-i\theta})d\theta$ satisfies the
required condition \eqref{eqn:moment} to be the  balayage measure for cardioid. We have
\begin{eqnarray*}
\int_{\partial U}w^nd\mu(w)&=&\frac{b^{n+2}}{2\pi}\int_{0}^{2\pi}{(1+ae^{i\theta}+ae^{-i\theta})^n}e^{in\theta}(1+2a^2+ae^{i\theta}+ae^{-i\theta})d\theta
\\&=&\frac{b^{n+2}}{2\pi}\left(a^n(1+2a^2)+a\frac{n!}{1!0!(n-1)!}a^{n-1}\right).{2\pi}\\
&=&{b^{n+2}}.a^n(1+2a^2+n)
\\&=&\frac{1}{\pi}\int_{U}w^ndm(w) \;\;\mbox{ for all $n$. }
\end{eqnarray*}
 Therefore $\mu$ satisfies \eqref{eqn:moment} for all $n$ and also has continuous density with respect to arclength of $\partial U$. Therefore, by Remark \ref{relations},  the balayage  measure $\nu_2$ on boundary of $U$ is given by
 $$
d\nu_2(w)=\frac{b^2}{2\pi}(1+2a^2+ae^{i\theta}+ae^{-i\theta})d\theta.
$$
Therefore we have
\begin{eqnarray*}
&&\int_{\partial U}|w|^2d\nu_{2}(w)-\frac{1}{\pi}\int_{U}|w|^2dm(w)
\\&=&\frac{b^4}{2\pi}\int_{0}^{2\pi}{(1+2a\cos{\theta})^2}(1+2a^2+2a\cos{\theta})d\theta-\frac{1}{\pi}\int_{0}^{\pi}\int_{0}^{b(1+2a\cos{\theta})}r^3drd\theta\\
&=&b^4\left((a^2+1)^2-\frac{1}{2}\right).
\end{eqnarray*}
Hence we get the required constant $R_U$ from \eqref{eqn:robinconstant}.
\end{proof}

 In the next few examples, we could not find the balayage measure explicitly, however we calculated the constant $R_U$ explicitly.

\begin{example}\label{ex:triangle}
Fix $0<a<1$. Suppose $U=aT$, where $T$ be triangle with cube roots of unity $1,\omega,\omega^2$ as
vertices. Then the constant is
$$
R_U=\frac{3}{4}+\frac{a^4}{2\pi}\cdot \frac{9\sqrt{3}}{80}.
$$
\end{example}

\begin{proof}[Computation for Example \ref{ex:triangle}]
The region $T$ can be written as
$$
T=\{r(tw^p+(1-t)w^{p+1})|0\leq r < 1, 0\leq t \leq 1, p=0,1,2\}.
$$
Suppose $x+iy=ar(t\omega^p+(1-t)\omega^{p+1})$. Then by change of variables, we have
$$
\frac{1}{\pi}dm(z)=\frac{1}{\pi} dxdy=\frac{1}{\pi}\frac{\sqrt{3}}{2}a^2rdrdt.
$$
Let $d\nu_2(t)$ be the balayage measure on the boundary of triangle $T$. Then from \eqref{eqn:moment}, we get
$$
\int_{\partial U}z^nd\nu_2(z)=\frac{1}{\pi}\int_{ U}z^ndm(z), \;\;\mbox{for all }\; n\geqslant 0.
$$
Which implies for all $n\ge 0$,
\begin{eqnarray*}
&&\int_{0}^{1}(t+(1-t)\omega)^n(1+\omega^n+\omega^{2n})d\nu_2(t)
\\&=&\int_{0}^{1}\int_{0}^{1}r^n(t+(1-t)\omega)^n(1+\omega^n+\omega^{2n})\frac{\sqrt{3}}{2\pi}a^2rdrdt.
\end{eqnarray*}
Since $1+\omega^n+\omega^{2n}=0$ when $n$ is not multiple of $3$. Therefore we get
\begin{eqnarray}\label{eqn:triangle}
\int_{0}^{1}(t+(1-t)\omega)^{3n}d\nu_2(t)= \int_{0}^{1}
(t+(1-t)\omega)^{3n}\frac{\sqrt{3}a^2}{2\pi(3n+2)}dt.
\end{eqnarray}
for all $n\ge 0$. This is the key equation to calculate the balayage measure on $\partial U$.
Solve this equation, we can get the balayage measure on $\partial U$. But we could not solve this equation.

We manage to calculate the constant $R_U$ using \eqref{eqn:triangle}.
By putting $n=1$ and comparing the real parts in both side of \eqref{eqn:triangle}, we have
\begin{eqnarray*}
\int_{0}^1(1-\frac{9}{2}t(1-t))d\nu_2(t)=\int_{0}^1(1-\frac{9}{2}t(1-t))\frac{\sqrt{3}a^2}{10\pi}dt.
\end{eqnarray*}
(As real part of $(t+(1-t)\omega)^{3}$ is $(1-\frac{9}{2}t(1-t))$. By using the fact that $\int_0^1 d\nu_2(t)=\frac{\sqrt{3}a^2}{4\pi}$
and simplifying the last equation we get
\begin{eqnarray}\label{eqn:nu2}
\int_0^1 t(1-t))d\nu_2(t)=\frac{\sqrt{3}a^2}{20\pi}.
\end{eqnarray}

\noindent Therefore we have
\begin{eqnarray*}
&&\int_{\partial U}|z|^2d\nu_2(z)-\frac{1}{\pi}\int_U |z|^2 dm(z)
\\&=&3\left[\int_{0}^1|ar(t+(1-t)\omega)|^2d\nu_2(t)-\int_0^1\int_0^1|ar(t+(1-t)\omega)|^2\frac{\sqrt{3}a^2}{2\pi}rdrdt\right]
\\&=&3a^2\left[\int_{0}^1[1-3t(1-t)]d\nu_2(t)-\frac{\sqrt{3}a^2}{2\pi}\int_0^1\int_0^1[1-3t(1-t)]r^3drdt\right]
\\&=&3a^2\left(\frac{\sqrt{3}a^2}{10\pi}-\frac{\sqrt{3}a^2}{16\pi}\right)\;\hspace{2cm}\mbox{( by \eqref{eqn:nu2} )}
\\&=&\frac{9\sqrt{3}a^4}{80\pi}.
\end{eqnarray*}
Hence we have the required constant $R_U$ from \eqref{eqn:robinconstant}.
\end{proof}

In the next example, we consider semi-disk. We only calculated the constant $R_U$. We were unable to find equilibrium measure.

\begin{example}\label{ex:halfdisk}
Fix $0<a<1$. Suppose $U=\{re^{i\theta}:0<r<a,0<\theta<\pi\}$. Then the constant is
$$
R_U=\frac{3}{4}+\frac{a^4}{2}\left(\frac{1}{2}-\frac{4}{\pi^2}\right).
$$
\end{example}

\begin{proof}[Computation for Example \ref{ex:halfdisk}]
Let $\nu_2=\nu_2'+\nu_2''$ be the balayage measure. Where $\nu_2'$  is the measure on diameter of semicircle ( $\{re^{i\theta }: r\le a, \theta=0,\pi\}$) and $d\nu_2''(z)=g(\theta)d\theta$
is the  measure on circular arc ($\{ae^{i\theta}:0\le \theta \le \pi\}$). Then from \eqref{eqn:moment}, we get
$$
\int_{\partial U}z^nd\nu_2(z)=\frac{1}{\pi}\int_{ U}z^nm(z), \;\;\mbox{for all }\; n\geqslant 0.
$$
Which implies for all $n\ge 0$,
\begin{eqnarray*}
&&\int_{-a}^{a}t^nd\nu_2'(t)+\int_{0}^{\pi}a^ne^{in\theta}g(\theta)d\theta
=\int_{0}^{\pi}\int_{0}^{a}r^ne^{in\theta}rdr\frac{d\theta}{\pi} .
\end{eqnarray*}
 Therefore we get
\begin{eqnarray}\label{eqn:semicircle}
\int_{-a}^{a}t^nd\nu_2'(t)+\int_{0}^{\pi}a^ne^{in\theta}g(\theta)d\theta=
 \left\{\begin{array}{lr}\frac{2ia^{n+2}}{n(n+2)\pi}&\mbox{$n$ is odd}\\o&\mbox{$n$ is even}\end{array}\right.
\end{eqnarray}
 This is the key equation to calculate the balayage measure   on $\partial U$. In principle if we solve this equation then we get the balayage measure on $\partial U$.
 But we could not solve this equation.

However, with out calculating balyage measure we are  able to calculate
the constant $R_U$ using \eqref{eqn:semicircle}. Comparing
imaginary part in both side of \eqref{eqn:semicircle}, we have
\begin{eqnarray}\label{eqn:relation}
\int_{0}^{\pi}\sin{n\theta}g(\theta)d\theta= \left\{\begin{array}{lr}\frac{2a^{2}}{n(n+2)\pi}&\mbox{$n$ is odd}\\o&\mbox{$n$ is even}\end{array}\right.
\end{eqnarray}
Since $g(\theta)$ is defined on $[0,\pi]$, its fourier series can be made to contain only sine terms, and moreover because of its
symmetry with respect to $\frac{\pi}{2}$, $g(\theta)$'s fourier series contains only odd sine terms. Therefore we get
\begin{eqnarray}\label{eqn:series}
g(\theta)=\frac{4a^2}{\pi^2}\sum_{k=1}^{\infty} \frac{\sin{(2k-1)\theta}}{4k^2-1}.
\end{eqnarray}

\noindent Using \eqref{eqn:semicircle}, \eqref{eqn:relation} and \eqref{eqn:series} we get
\begin{eqnarray*}
\int_{\partial U}|z|^2d\nu_2(z)&=& \int_{-a}^{a}t^2d\nu_2'(t)+\int_{0}^{\pi}a^2g(\theta)d\theta
\\&=&a^2\int_{0}^{\pi}(1-\cos{2\theta})g(\theta)d\theta
\\&=&\frac{4a^4}{{\pi}^2}\left[\sum_{k=1}^{\infty}\frac{1}{(4k^2-1)}\left(\frac{2}{2k-1}-\frac{1}{2k+1}-\frac{1}{2k-3}\right)\right]
\\&=&\frac{2a^4}{{\pi}^2}\left(\frac{3\pi^2}{8}-2\right)
\\&=&a^4\left(\frac{3}{4}-\frac{4}{{\pi}^2}\right).
\end{eqnarray*}
Therefore  the constant
$$
R_U=\frac{3}{4}+\frac{1}{2}\left[\int_{\partial U}|z|^2d\nu_2(z)-\frac{1}{\pi}\int_{ U}|z|^2dm(z)\right]=\frac{3}{4}+\frac{a^4}{2}\left(\frac{1}{2}-\frac{4}{{\pi}^2}\right).
$$
Hence the desired result.
\end{proof}

\section{Proof of Theorem \ref{thm:annulus1}}\label{sec:annulus}

In this section we give the proof of Theorem \ref{thm:annulus1} using the result of Kostlan \cite{kostlan}.   
\begin{result}[Kostlan]\label{lem:kostlan}
The set of absolute values of the eigenvalues of $G_n$ has the same distribution as $\{R_1,R_2,\ldots,R_n\} $ where $R_k$ are independent and $R_k^2\sim \mbox{Gamma}(k,1)$.
\end{result}
As a corollary the set of absolute values of the points of ${\mathcal X}_{\infty}$  has the same distribution as $\{R_1,R_2,\ldots \},$ where $R_k^2\sim \mbox{Gamma}(k,1)$ and all the $R_k$s are independent.

 We state a lemma which will be used in the proof of Theorem \ref{thm:annulus1}.

\begin{lemma}\label{lem:iqn}
Fix $c$ such that $0<c<1$. Then
\begin{eqnarray}\label{eqn:inq1}
&&\P(R_k^{2}<c^2r^2)\le e^{-k\log(\frac{k}{c^2r^2})+k-c^2r^2} \;\mbox{for all } k> c^2r^2.
\\&&\P(R_k^{2}>r^2)\le e^{-r^2+k-k\log(\frac{k}{r^2})}\;\mbox{ for all } k<r^2.\label{eqn:inq2}
\end{eqnarray}
\end{lemma}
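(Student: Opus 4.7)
The plan is to recognize both inequalities as Chernoff-type large deviation bounds for the Gamma distribution and prove them by optimizing the Laplace transform. Recall that if $X \sim \mathrm{Gamma}(k,1)$, its moment generating function is $\E[e^{tX}]=(1-t)^{-k}$ for $t<1$, and by symmetry $\E[e^{-tX}]=(1+t)^{-k}$ for $t>-1$. These explicit formulas are the only inputs I will need.

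For the upper tail, I would fix $k<r^2$ and apply Markov's inequality to $e^{tR_k^2}$ with $t\in(0,1)$:
\begin{equation*}
\P(R_k^2>r^2)\le e^{-tr^2}\,\E[e^{tR_k^2}]=e^{-tr^2}(1-t)^{-k}=\exp\bigl(-tr^2-k\log(1-t)\bigr).
\end{equation*}
Differentiating the exponent in $t$ gives the optimum $1-t^*=k/r^2$, which lies in $(0,1)$ precisely because $k<r^2$. Substituting $t^*=1-k/r^2$ collapses the exponent to $-r^2+k-k\log(k/r^2)$, yielding \eqref{eqn:inq2}.

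For the lower tail, I would fix $k>c^2r^2$ and apply Markov's inequality to $e^{-tR_k^2}$ with $t>0$:
\begin{equation*}
\P(R_k^2<c^2r^2)=\P(e^{-tR_k^2}>e^{-tc^2r^2})\le e^{tc^2r^2}(1+t)^{-k}=\exp\bigl(tc^2r^2-k\log(1+t)\bigr).
\end{equation*}
Optimizing over $t>0$ gives $1+t^*=k/(c^2r^2)$, which is strictly greater than $1$ exactly when $k>c^2r^2$. Plugging $t^*=k/(c^2r^2)-1$ back in reduces the exponent to $k-c^2r^2-k\log(k/(c^2r^2))$, which is the bound \eqref{eqn:inq1}.

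There is no real obstacle here: the whole argument is a routine Chernoff optimization, and the regimes $k>c^2r^2$ and $k<r^2$ in the hypothesis are exactly the ranges of $k$ for which the optimal tilting parameter $t^*$ stays in the admissible range $(0,1)$ or $(0,\infty)$ respectively. The only point to double-check is that the optimizer in each case truly lies in the domain of validity of the MGF, which is precisely what the assumed inequalities on $k$ ensure.
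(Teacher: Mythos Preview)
Your proof is correct and essentially the same Chernoff optimization as the paper's. The only difference is in \eqref{eqn:inq1}: the paper first invokes the Gamma--Poisson duality $\P(R_k^2<c^2r^2)=\P(\mathrm{Poisson}(c^2r^2)>k)$ and then Chernoff-bounds the Poisson tail, whereas you bound the Gamma lower tail directly via the negative-tilt MGF $\E[e^{-tR_k^2}]=(1+t)^{-k}$. Both routes yield the identical exponent after optimization, and your version is arguably tidier since it treats the two tails symmetrically without the auxiliary Poisson step.
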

Using Result~\ref{lem:kostlan} and Lemma~\ref{lem:iqn} we  prove Theorem \ref{thm:annulus1}. 

\begin{proof}[Proof of Theorem \ref{thm:annulus1}]
{\bf Upper bound: }From the consequence of Result \ref{lem:kostlan} we have
\begin{eqnarray*}
\P[n_c(r)=0]&=&\prod_{k=1}^{\infty}\left(\P(R_k^2<c^2r^2)+\P(R_k^2>r^2)\right)
\\&\le&\prod_{c^2r^2}^{r^2}\left(\P(R_k^2<c^2r^2)+\P(R_k^2>r^2)\right)
\\&\le&\prod_{c^2r^2}^{r^2}\left(e^{-k\log(\frac{k}{c^2r^2})+k-c^2r^2}+e^{-r^2+k-k\log(\frac{k}{r^2})}\right) 
\end{eqnarray*}
Last inequality is follows from Lemma \ref{lem:iqn}. Let $\lambda=-\frac{1-c^2}{2\log c}$. Then
$$
-k\log\left(\frac{k}{c^2r^2}\right)+k-c^2r^2\le -r^2+k-k\log\left(\frac{k}{r^2}\right) \;\; \mbox{ for }k\ge \lambda r^2,
$$
and the opposite inequality holds for $k\le \lambda r^2$.
It is clear that $c^2<\lambda<1 $ for $0<c<1$.
Therefore we have
\begin{eqnarray}\label{eqn:prod}
\P[n_c(r)=0]
&\le&2^{r^2-c^2r^2} \prod_{k=c^2r^2}^{\lambda r^2}e^{-k\log(k/r^2)+2k\log c+k-c^2r^2}\prod_{k=\lambda r^2}^{r^2}e^{-r^2+k-k\log(k/r^2)}\nonumber
\\&=&e^{o(r^4)}\prod_{k=c^2 r^2}^{r^2}e^{-k\log (k/r^2)+k}\prod_{k=c^2 r^2}^{\lambda r^2}e^{2k\log c-c^2r^2}\prod_{k=\lambda r^2}^{r^2}e^{-r^2}.
\end{eqnarray}
Since  $\sum_{k=1}^{r^2}k\log (k/r^2)=r^4\int_{0}^{1}x\log (x) dx+O(r^2\log r)=-\frac{1}{4}r^4+O(r^2\log r)$
 as $r\to \infty$. Hence as $r\to \infty$, we have
\begin{eqnarray}\label{eqn:1}
&&\sum_{c^2r^2}^{r^2}(-k\log(k/r^2)+k)\nonumber
\\&=&\sum_{k=1}^{r^2}(-k\log(k/r^2)+k)-\sum_{1}^{c^2r^2}(-k\log(k/r^2)+k)\nonumber
\\&=&\frac{1}{4}r^4+\frac{1}{2}r^4-\left\{\sum_{1}^{c^2r^2}(-k\log(k/c^2r^2)+k-k\log c^2)\right\}+O(r^2\log r)\nonumber
\\&=&\frac{3}{4}r^4-\left(\frac{1}{4}c^4r^4+\frac{1}{2}c^4r^4-c^4r^4\log c\right)+O(r^2\log r)\nonumber
\\&=&\left(\frac{3}{4}-\frac{3}{4}c^4+c^4\log c\right)r^4(1+o(1)).
\end{eqnarray}
Again as $r\to \infty$, we have
\begin{eqnarray}\label{eqn:2}
\sum_{k=c^2r^2}^{\lambda r^2}(2k\log c -c^2r^2)
&=&(\lambda^2 \log c-c^4\log c+c^4-\lambda c^2)r^4(1+o(1)).
\end{eqnarray}
Therefore by (\ref{eqn:1}) and (\ref{eqn:2}) from (\ref{eqn:prod}) we get
\begin{eqnarray}
&&\prod_{k=c^2r^2}^{\lambda r^2}\P(R_k^2<c^2r^2)\prod_{k=\lambda r^2}^{r^2}\P(R_k^2<r^2)\nonumber
\\&=&\exp\left\{\left(\frac{3}{4}-\frac{3}{4}c^4+c^4\log c+\lambda^2\log c-c^4\log c+c^4-\lambda c^2-1+\lambda \right)r^4(1+o(1))\right\}\nonumber
\\&=&\exp\left\{\left(-\frac{1}{4}(1-c^4)+\lambda(1-c^2)+\lambda^2\log c\right )r^4(1+o(1))\right\}\nonumber
\end{eqnarray}
as $r\to \infty$. Hence we have the following upper bound
\begin{eqnarray}\label{eqn:upperbound}
\P[n_c(r)=0]\le e^{(-\frac{1}{4}(1-c^4)+\lambda(1-c^2)+\lambda^2\log c )r^4(1+o(1))},
\end{eqnarray}
as $r\to \infty$, where $\lambda=-\frac{(1-c^2)}{2\log c}$.

\noindent{\bf Lower bound:} We want to get lower bound for $\P[n_c(r)=0]$ as $r\to \infty$.
%
%
We have
\begin{eqnarray}\label{eqn:initialstep}
&&{}\hspace{.5cm}\P[n_c(r)=0]=\prod_{k=1}^{\infty}\left(\P(R_k^2<c^2r^2)+\P(R_k^2>r^2)\right)\nonumber
\\&&\ge\prod_{k=1}^{c^2r^2}\P(R_k^2<c^2r^2)\prod_{k=c^2r^2}^{\lambda r^2}\P(R_k^2<c^2r^2)\prod_{k=\lambda r^2}^{r^2}\P(R_k^2>r^2)\prod_{k=r^2}^{\infty}\P(R_k^2>r^2).
\end{eqnarray}
We estimate a lower bound for each product term. Since $R_k^2$ has distribution $\mbox{Gamma}(k,1)$ and
$\P(\mbox{Gamma}(k,1)>a)=\P(\mbox{Poisson}(a)<k)$, we have
\begin{eqnarray*}
&&\P(R_k^2>a)=\sum_{j=0}^{k-1}\frac{a^j}{j!}e^{-a}\ge \frac{a^{k-1}}{(k-1)!}e^{-a}
\\&&\P(R_k^2< a)=\sum_{j=k+1}^{\infty}\frac{a^j}{j!}e^{-a}\ge \frac{a^{k+1}}{(k+1)!}e^{-a}.
\end{eqnarray*}
Therefore we have
\begin{eqnarray}\label{eqn:prod1}
&&\prod_{k=c^2r^2}^{\lambda r^2}\P(R_k^{2}<c^2r^2)\prod_{k=\lambda r^2}^{r^2}\P(R_k^{2}>r^2)\nonumber
\\&=&\prod_{k=c^2r^2}^{\lambda r^2}\frac{(c^2r^2)^{k+1}}{(k+1)!}e^{-c^2r^2}\prod_{k=\lambda r^2}^{ r^2}\frac{(r^2)^{k-1}}{(k-1)!}e^{-r^2}\nonumber
\\&=&\exp\left\{(-\frac{1}{4}(1-c^4)+\lambda (1-c^2)+\lambda^2\log c)r^4(1+o(1))\right\}.
\end{eqnarray}
Again as $r\to \infty$, we have
\begin{eqnarray*}
\prod_{k=1}^{r^2}\frac{(r^2)^{k-1}}{(k-1)!}=\exp\left\{\frac{3}{4}r^4+O(r^2\log r)\right\},
\end{eqnarray*}
and consequently we have
\begin{eqnarray}\label{eqn:estimate1}
\prod_{k=c^2r^2}^{r^2}\frac{(r^2)^{k-1}}{(k-1)!}&=&\left(\prod_{k=1}^{r^2}\frac{(r^2)^{k-1}}{(k-1)!}\right).\left(\prod_{k=1}^{c^2r^2}\frac{(r^2)^{k-1}}{(k-1)!}\right)^{-1}\nonumber
\\&=&\left(\prod_{k=1}^{r^2}\frac{(r^2)^{k-1}}{(k-1)!}\right).\left(\prod_{k=1}^{c^2r^2}\frac{(c^2r^2)^{k-1}}{(k-1)!}\right)^{-1}.\left(\prod_{k=1}^{c^2r^2}(c^2)^{k-1}\right)\nonumber
\\&=&e^{\frac{3}{4}r^4+O(r^2\log r)}.e^{-\frac{3}{4}c^4r^4+O(r^2\log r)}e^{c^4r^4\log c+O(r^2)}\nonumber
\\&=&e^{\left(\frac{3}{4}(1-c^4)+c^4\log c)\right)r^4(1+o(1))}.
\end{eqnarray}
It is clear that
\begin{eqnarray}\label{eqn:est2}
&&\prod_{k=c^2r^2}^{\lambda r^2}e^{-c^2r^2+2k\log c}=\exp\left\{-c^2(\lambda-c^2)r^4+(\lambda^2-c^4)r^4\log c+O(r^2)\right\},
\\&&\prod_{k=\lambda r^2}^{r^2}e^{-r^2}=\exp{\{-r^2(r^2-\lambda r^2)\}}=\exp\{-(1-\lambda)r^4\}.\label{eqn:est3}
\end{eqnarray}
By (\ref{eqn:estimate1}), (\ref{eqn:est2}) and (\ref{eqn:est3}) from (\ref{eqn:prod1}) we get
\begin{eqnarray}\label{eqn:small}
&&\prod_{k=c^2r^2}^{\lambda r^2}\P(R_k<c^2r^2)\prod_{k=\lambda r^2}^{r^2}\P(R_k>r^2)\nonumber
\\&=&\exp\left\{\left(-\frac{1}{4}(1-c^4)+\lambda (1-c^2)+\lambda^2\log c\right)r^4(1+o(1))\right\}.
\end{eqnarray}

Now we show that $\prod_{k=1}^{c^2r^2}\P(R_k^2<c^2r^2)\prod_{k=r^2+1}^{\infty}\P(R_k^2>r^2)=e^{O(r^2)}$.
Recall that $\P(\mbox{Poisson}(a)>a)\to \frac{1}{2}$ as $a\to \infty$. Therefore for large enough $r$, we have $\P[R_k^2>r^2]\ge \frac{1}{4}$  for any $k>r^2$ and $\P(R_k^2<c^2r^2)\ge \frac{1}{4}$ for all $k\le c^2r^2$. So, for large enough $r$, we have
\begin{eqnarray}\label{eqn:middle}
&&\prod_{k=r^2+1}^{2r^2}\P(R_k^2>r^2)\ge e^{-r^2\log 4}.
\\&&\prod_{k=1}^{c^2r^2}\P(R_k^2<c^2r^2)\ge e^{-c^2r^2\log 4}.\label{eqn:first}
\end{eqnarray}
Since
\begin{eqnarray*}
\P(R_a< a/2)&=&\P(\mbox{Poisson}(a/2)> a)
\le e^{-a}\E[e^{\mbox{Poisson}(a/2)}]
=e^{-c.a},
\end{eqnarray*}
where $c=(1-e)/2$ is constant does not depend a. Therefore we have
\begin{eqnarray}\label{eqn:tail}
\prod_{k=2r^2+1}^{\infty}\P(R_k^2>r^2)&=&\prod_{k=2r^2+1}^{\infty}\P(R_k^2\ge k/2)
=\prod_{k=2r^2+1}^{\infty}(1-\P(R_k^2< k/2))\nonumber
\\&\ge&\prod_{k=2r^2+1}^{\infty}(1-e^{-c.k})=C,
\end{eqnarray}
where $C$ is a positive constant (as $\sum_{k>2r^2}e^{-c.k}<\infty$). By  (\ref{eqn:middle}), (\ref{eqn:first}) and (\ref{eqn:tail}) we get
\begin{eqnarray}\label{eqn:large}
\prod_{k=1}^{c^2r^2}\P(R_k^2<c^2r^2)\prod_{k=r^2+1}^{\infty}\P(R_k^2>r^2)=e^{O(r^2)},
\end{eqnarray}
as $r\to \infty$. By (\ref{eqn:small}) and (\ref{eqn:large}) from (\ref{eqn:initialstep}) we have
\begin{eqnarray*}\label{eqn:lowerbound}
\P[n_c(r)=0]\ge \exp\left\{\left(-\frac{1}{4}(1-c^4)+\lambda (1-c^2)+\lambda^2\log c\right)r^4(1+o(1))\right\}.
\end{eqnarray*}
as $r\to \infty$. Therefore by (\ref{eqn:upperbound}) we have
$$
\lim_{r\to \infty}\frac{1}{r^4}\log \P[n_c(r)=0]=-\frac{1}{4}(1-c^4)+\lambda (1-c^2)+\lambda^2\log c,
$$
where $\lambda=-(1-c^2)/2\log c$. Replacing the value of $\lambda $ we get
$$
\lim_{r\to \infty}\frac{1}{r^4}\log \P[n_c(r)=0]=-\frac{(1-c^2)}{4}\cdot\left(1+c^2+\frac{1-c^2}{\log c}\right).
$$
Hence the result.
\end{proof}

Now we prove Lemma \ref{lem:iqn}.
\begin{proof}[Proof of Lemma \ref{lem:iqn}]{\bf Proof of (\ref{eqn:inq1}):}
Let $X\sim \mbox{Poisson}(\lambda)$ . Then we have
\begin{eqnarray*}
\P(X>t)\le e^{-\theta t}\E(e^{\theta X})=e^{-\theta t+\lambda e^{\theta}-\lambda}.
\end{eqnarray*}
The bound is optimized for $\theta = \log (t/\lambda )$. Since $\theta>0$, hence $t>\lambda$. Therefore for $t>\lambda$, we have
$$
\P(X>t)\le e^{-t\log (t/\lambda)+t-\lambda}.
$$
In particular, for $\lambda=c^2r^2$ we get
\begin{eqnarray*}
\P(R_k^2<c^2r^2)&=&\P(\mbox{Poisson}(c^2r^2)>k)
\\&\le&e^{-k\log(k/c^2r^2)+k-c^2r^2},
\end{eqnarray*}
for $k>c^2r^2$.

\noindent{\bf Proof of (\ref{eqn:inq2}):}Since $R_k^2\sim \mbox{Gamma}(k,1)$, hence we have
$$
\P(R_k^2>r^2)\le e^{-\theta r^2}\E[e^{\theta R_k^2}]=e^{-\theta r^2}(1-\theta)^{-k}.
$$
For $k<r^2$, the bound is optimized for $\theta =1-\frac{k}{r^2}$. Therefore for $k<r^2$, the optimal bound is given by
\begin{eqnarray*}
\P(R_k^2>r^2)&\le& e^{-(1-\frac{k}{r^2})r^2-k\log (\frac{k}{r^2})}
\\&=&e^{-r^2+k-k\log (\frac{k}{r^2})}.
\end{eqnarray*}
Hence the result.
\end{proof}

\noindent{\bf Acknowledgments:} We are grateful to Manjunath Krishnapur for many
valuable suggestions and discussions. We  would like to thank Alon Nishry for pointing out mistakes in the proof of Theorem \ref{thm:generalformula} in an earlier draft. We also would like to thank Abhishek Dhar for illuminating remarks in the early stages of the project.

\noindent{\bf Funding:} This research is partially supported by UGC Centre for Advanced Studies. Research of Nanda Kishore Reddy is supported by CSIR-SPM fellowship, CSIR, Government of India.

\bibliography{Hole_IMRN-revision}
\bibliographystyle{amsplain}

\end{document}